\numberwithin{equation}{section}
\newlength{\bibitemsep}\setlength{\bibitemsep}{0\baselineskip plus .05\baselineskip minus .05\baselineskip}
\newlength{\bibparskip}\setlength{\bibparskip}{0\baselineskip}
\let\oldthebibliography\thebibliography
\renewcommand\thebibliography[1]{%
  \oldthebibliography{#1}%
  \setlength{\parskip}{\bibitemsep}%
  \setlength{\itemsep}{\bibparskip}%
}
\renewcommand{\emptyset}{\ensuremath{\text{\upshape\O}}}
\renewcommand{\phi}{\varphi}
\newcommand{\bbC}{\mathbb{C}}
\newcommand{\bbF}{\mathbb{F}}
\newcommand{\bbQ}{\mathbb{Q}}
\newcommand{\bbR}{\mathbb{R}}
\newcommand{\bbZ}{\mathbb{Z}}
\newcommand{\Qtr}{\bbQ^\mathrm{tr}}
\newcommand{\rmA}{\mathrm{A}}
\newcommand{\rmD}{\mathrm{D}}
\newcommand{\rmF}{\mathrm{F}}
\newcommand{\hatF}{\mathrm{F}^{\wedge}}
\newcommand{\hatFQ}{\mathrm{FQ}^{\wedge}}
\newcommand{\hatGr}{\mathrm{Gr}^{\wedge}}
\newcommand{\hatZ}{\widehat{\mathbb{Z}}}
\newcommand{\rmK}{\mathrm{K}}
\newcommand{\rmS}{\mathrm{S}}
\newcommand{\rmW}{\mathrm{W}}
\newcommand{\Gr}{\mathrm{Gr}}
\newcommand{\AS}{\mathrm{AS}}
\newcommand{\GW}{\mathrm{GW}}
\DeclareMathOperator{\FQ}{FQ}
\DeclareMathOperator{\id}{id}
\DeclareMathOperator{\gr}{gr}
\DeclareMathOperator{\Tr}{Tr}
\DeclareMathOperator{\Mor}{Mor}
\DeclareMathOperator{\Aut}{Aut}
\DeclareMathOperator{\Gal}{Gal}
\DeclareMathOperator{\Orb}{Orb}
\DeclareMathOperator{\Ord}{Ord}
\DeclareMathOperator{\Inv}{Inv}
\DeclareMathOperator{\Fix}{Fix}
\DeclareMathOperator{\Homeo}{Top}
\DeclareMathOperator{\ch}{char}
\DeclareMathOperator{\tr}{tr}
\newtheorem{theorem}{Theorem}[section]
\newtheorem{proposition}[theorem]{Proposition}
\newtheorem{lemma}[theorem]{Lemma}
\newtheorem*{theorem*}{Theorem}
\theoremstyle{definition}
\newtheorem{definition}[theorem]{Definition}
\newtheorem{example}[theorem]{Example}
\newtheorem{examples}[theorem]{Examples}
\newtheorem{remark}[theorem]{Remark}
\title{\bf Artin--Schreier quandles of involutions\\ in absolute Galois groups}
\author{Markus Szymik}
\date{\mydate\today}
\begin{document}

\maketitle


\renewcommand{\abstractname}{\vspace{-\baselineskip}}

\noindent
We introduce a new invariant of fields that refines their real spectrum and is related to their absolute Galois group: the Artin--Schreier quandle. For totally real number fields, it is freely generated---in the variety of profinite involutory quandles---by a Cantor space of indeterminates. For Laurent series fields, we compute it in terms of the Artin--Schreier quandle of the coefficient field. This result, along with other examples, shows that, in general, there are relations.


\section{Introduction}

The real spectrum of a commutative ring is a topological space that enriches its Zariski spectrum by incorporating ideas from the theory of formally real fields. For a field, which has only one prime ideal, the Zariski spectrum is merely a singleton, whereas the real spectrum comprises the space of orderings of the field. From a group-theoretic perspective, we know that we can identify the real spectrum of a field with the space of conjugacy classes of involutions in the absolute Galois group~(see Section~\ref{sec:AS(F)} for a review). This observation allows us, in this paper, to further refine the real spectrum of a field~$F$. Instead of working with equivalence classes in the form of conjugacy classes of involutions, we define the Artin--Schreier quandle~$\AS(F)$ directly, based on the set of all involutions within the absolute Galois group~$\Gal(F)=\Gal(\overline{F}|F)$. A novel observation is that this set not only possesses a topological structure as a subspace of a profinite group but also an algebraic one: Although it is not a subgroup, it is still closed under conjugation. The algebraic theory of quandles is a well-established axiomatic framework for modelling conjugation, and~$\AS(F)$, therefore, inherits the structure of an involutory quandle~(see Section~\ref{sec:involutions} for a review). One of our main results characterises the profinite involutory quandle~$\AS(F)$ in the most fundamental cases:

\begin{theorem} 
The Artin--Schreier quandle~$\AS(\bbQ)$ of the rational number field~$\bbQ$ is a free profinite involutory quandle. A Cantor space of involutions inside the absolute Galois group~$\Gal(\bbQ)$ gives a basis. A similar statement holds for all totally real number fields.
\end{theorem}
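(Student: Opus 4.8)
The plan is to reduce the quandle statement to a structural description of the closed subgroup $H \subseteq \Gal(\bbQ)$ topologically generated by all involutions, and then to prove that $H$ is a free pro-finite product of copies of $\bbZ/2$ indexed by a Cantor space. First I would recall the Artin--Schreier dictionary: the involutions in $\Gal(\bbQ)$ are exactly the complex conjugations, that is, the generators of the groups $\Gal(\Qbar/R)$ as $R$ ranges over the real closures of $\bbQ$ inside $\Qbar$. Since a real closed field is rigid and has absolute Galois group of order two, the centralizer of each involution is its own group of order two; and since a nontrivial finite subgroup of an absolute Galois group has order two, any two distinct involutions $\sigma\neq\tau$ have $\sigma\tau$ of infinite order and thus generate an infinite dihedral group. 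So the involutions satisfy no relation other than being of order two, which is precisely the local picture of a free pro-finite product of copies of $\bbZ/2$.

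Next I would carefully separate the two notions of orbit that are in play, since this is what makes the theorem nontrivial. Conjugacy classes of involutions in the full group $\Gal(\bbQ)$ correspond to orderings of $\bbQ$, of which there is exactly one, so all involutions are conjugate in $\Gal(\bbQ)$. The quandle $\AS(\bbQ)$, however, only records conjugation by products of involutions, i.e. by elements of $H$, so its orbits are the much finer $H$-conjugacy classes. The geometric input I would then establish is that the set of involutions $\Inv(\Gal(\bbQ))$ is a closed, hence pro-finite, subspace of $\Gal(\bbQ)$, that it is metrizable and perfect (every real closure is approximated by distinct ones, so no involution is isolated), and that the orbit space $\Inv(\Gal(\bbQ))/H$ is a Cantor space. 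I would choose a closed transversal $X \subseteq \Inv(\Gal(\bbQ))$ to the $H$-orbits, itself then a Cantor space, as the candidate basis.

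The theorem now becomes the assertion that the canonical comparison map $\hatFQ(X) \to \AS(\bbQ)$ from the free pro-finite involutory quandle on $X$ is an isomorphism. Surjectivity reduces to $X$ meeting every orbit and topologically generating $H$, so that every involution is reached from $X$ by the quandle operations. Injectivity is freeness, and it is the heart of the matter: I must show that $H$ is the free pro-finite product $\coprod_{x\in X}\bbZ/2$, equivalently that every continuous map from $X$ into a finite involutory quandle extends over $\AS(\bbQ)$. The hard part will be exactly this freeness. The pairwise-dihedral relation only excludes relations of length two, whereas the free pro-finite product requires that arbitrary finite configurations of involutions be realized independently inside $\Gal(\bbQ)$; I expect to obtain this from a projectivity/embedding-problem criterion for free pro-finite products, realizing and separating finite quotients and lifting against finite involutory quandles, using weak approximation for orderings and the abundance of mutually independent real closures of $\bbQ$ to solve the relevant split embedding problems.

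Finally, for an arbitrary formally real number field $F$ the same argument applies once the inputs are replaced by their analogues. Here $F$ has only finitely many orderings, so $\Inv(\Gal(F))$ falls into finitely many $\Gal(F)$-conjugacy classes; but the $H$-orbit space is again compact, metrizable, totally disconnected and perfect, hence a Cantor space, and the free pro-finite product description of the involution-generated subgroup goes through verbatim. This yields that $\AS(F)$ is a free pro-finite involutory quandle on a Cantor space of involutions inside $\Gal(F)$, as claimed.
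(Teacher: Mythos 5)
Your reduction of the theorem to a group-theoretic statement---that the closed subgroup $H\subseteq\Gal(\bbQ)$ generated by the involutions is a free pro-finite product of copies of $\bbZ/2$ indexed by a Cantor space---is exactly the paper's reduction: $H$ is normal, its fixed field is the field $\Qtr$ of totally real algebraic numbers, and $H=\Gal(\Qtr)$. But at the decisive point your proposal has a genuine gap. The freeness of $\Gal(\Qtr)$, i.e.\ the isomorphism $\Gal(\Qtr)\cong\hatF_2(S)$ for a Cantor space $S$, is not something one can dispatch with the sketched `projectivity/embedding-problem criterion' plus `weak approximation for orderings'; it is a deep theorem of Fried--Haran--V\"olklein and, independently, Pop, and the paper's proof consists precisely of quoting it and then transferring freeness from the group to the quandle: injectivity of $\hatFQ_2(S)\to\AS(\bbQ)$ via the chain of injections $\hatFQ_2(S)\to\hatF_2(S)\to\Gal(\Qtr)\to\Gal(\bbQ)$, and surjectivity via the pro-finite Kurosh theorem of Haran--Jarden, which says every involution of $\hatF_2(S)$ is conjugate to a generator in $S$. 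Your own text concedes that the pairwise infinite-dihedral relations only exclude relations of length two; everything beyond that---which is the entire content of the theorem---is left at the level of `I expect to obtain this', which is not a proof. Had you instead cited the Fried--Haran--V\"olklein/Pop theorem, your outline would essentially coincide with the paper's argument.

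Two further steps would also need repair. First, your plan to `choose a closed transversal $X$' to the $H$-orbits is not innocent: continuous surjections of pro-finite spaces need not admit continuous sections (collapse two points $a\neq b$ of a Cantor set $C$; a section of $C\to C/(a\sim b)$ would have to have image $C\setminus\{a\}$ or $C\setminus\{b\}$, which is not compact). In the paper the basis $S$ is not produced as a transversal; it comes packaged with the isomorphism $\Gal(\Qtr)\cong\hatF_2(S)$, and only afterwards does one identify $S$ with the set of quandle orbits. Second, for a general formally real number field $F$ the argument does not go through `verbatim'. Unless $F|\bbQ$ is Galois (in which case $\AS(F)=\AS(\bbQ)$ because $\Gal(F)$ is normal and contains one, hence every, involution), the paper must pass to the compositum $E=F\Qtr$, invoke Prestel's theorem that proper finite extensions of $\Qtr$ are PRC, Weissauer's theorem that they are Hilbertian, and then the Fried--V\"olklein theorem on Hilbertian PRC fields together with an argument of Jarden to conclude $\Gal(E)\cong\Gal(\Qtr)\cong\hatF_2(S)$, whence $\AS(F)=\Inv(\Gal(E))\cong\hatFQ_2(S)$. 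None of that is a formal consequence of the case $F=\bbQ$.
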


We prove this result as Theorems~\ref{thm:AS(QQ)} and~\ref{thm:AS(F)} in the text. Totally imaginary number fields have no involutions in their absolute Galois groups, and their Artin--Schreier quandles are empty.

For more context, recall that Hilbert's 17th problem asked if we could write a real multivariate polynomial that only assumes non-negative values as a sum of squares of rational functions. This problem was solved affirmatively in 1927 by Artin~\cite{Artin:Hilbert17} who, in collaboration with Schreier~\cite{Artin:reell_algebraisch,AS:1,AS:2}, founded the subject of real algebra. In 1979, Coste and Roy introduced the real spectrum of a commutative ring~\cite{CR}. It plays the same role in real algebraic geometry as the ordinary prime spectrum in the usual algebraic geometry. For a field~$F$, the real spectrum has various, seemingly different descriptions: If~$F$ is a number field, we can identify it with the~(finite) set of embeddings of~$F$ into the field~$\bbR$ of real numbers. In general, referring again to the review in Section~\ref{sec:AS(F)} for more detail, it is better to think of it as the set of orderings of~$F$, as the set of ring morphisms~$\rmW(F)\to\bbZ$, where~$\rmW(F)$ is the Witt ring of quadratic forms over~$F$, or---for us---as the set of conjugacy classes of involutions in the absolute Galois group~$\Gal(F)$. The Artin--Schreier quandle replaces the set of conjugacy classes with the collection of all their representatives and non-judgementally embraces the algebraic structure that comes with it. 

The thrust of this idea extends beyond real algebra to the many other contexts of conjugation in algebraic number theory, and it opens new avenues in the flourishing~`arithmetic topology' program initiated by Manin, Mazur, Mumford, and others. The connection to topology comes via an algebraic structure that codifies conjugation: quandles. We refer to Section~\ref{sec:involutions} again for a review of the basic definition, but we point out here that we can think of quandles as generalisations of knots: first considered by Burde~\cite{Burde} in that context, Joyce~\cite{Joyce} and Matveev~\cite{Matveev}, building on earlier work of Waldhausen, showed that knot quandles, a canonical refinement of knot groups, completely classify knots. The knot quandle of a prime knot is the conjugacy class of the meridian with the quandle structure given by conjugation.~(For composite knots, the description is more subtle and not relevant here.) Meridians are generators of the fundamental groups of `small' disks that are transversal to the knot and have been punctured by removing the intersection with the knot. Given the importance of meridians for understanding knot groups and quandles, we are urged to understand the function fields~$F(\!(t)\!)$ of punctured formal disks. The following result is Theorem~\ref{thm:Laurent} in the main text.

\begin{theorem}
For any field~$F$, there is an isomorphism
\[
\AS(F(\!(t)\!))\cong\AS(F)\times\hatFQ_2\{\sigma,\tau\}\cong\AS(F)\times\hatZ
\]
of profinite involutory quandles, where~$\hatFQ_2\{\sigma,\tau\}$ is the free profinite involutory quandle on a set~$\{\sigma,\tau\}$ with two elements, and which, as a profinite space, is homeomorphic to~$\hatZ$. The quandle structure is given by~$a\rhd b=2a-b$ for~$a,b\in\hatZ$.
\end{theorem}

We note that Takahashi~\cite{Takahashi}~(see also Davis--Schlank~\cite{Davis--Schlank}) has associated quandles with Galois covers of arithmetic schemes in a different way than here. We refer to Shusterman~\cite{Shusterman} for another framework that uses quandles and the more general racks in number theory.

This paper is written to be broadly accessible to number theorists and topologists. We discuss involutions and involutory quandles in Section~\ref{sec:involutions}, with the second half of that section describing the free models and covering the topological details needed for the profinite context. We specialise this general theory to Artin--Schreier quandles in Section~\ref{sec:AS(F)}. We describe the Artin--Schreier quandles of number fields and other global fields in Section~\ref{sec:QQ}. The final Section~\ref{sec:Laurent} contains Theorem~\ref{thm:Laurent} and discusses examples of Artin--Schreier quandles that have relations.


\section{Involutions and quandles}\label{sec:involutions}

We start with some necessary recollections and extensions to the profinite setting of the main algebraic structure that we are interested in: quandles, which describe symmetries in a way different from, but of course related to, groups. Suitable references are~\cite{Brieskorn, Fenn+Rourke, Joyce, Matveev}. The terminology and notation vary considerably among the various sources, and we shall stick to the following throughout:

\begin{definition}
Consider a pair~$(R,\rhd)$ consisting of a set~$R$ together with a binary operation~$\rhd$, a~\textit{magma}. A morphism of magmas is a map that is compatible with the operations~$\rhd$, and an automorphism is, of course, a bijective morphism. A~\textit{rack} is a magma such that, for all~$x$ in~$R$, the left-multiplication~$\lambda_x\colon y\mapsto x\rhd y$ is an automorphism of the underlying magma. The corresponding equation says
\begin{equation}\label{eq:rack_axiom}
x\rhd(y\rhd z) = (x\rhd y)\rhd(x\rhd z)
\end{equation}
for all~$x$,~$y$, and~$z$ in~$R$. A \textit{profinite rack} is a profinite space~$R$ together with a rack structure~\hbox{$\rhd\colon R\times R\to R$} that is continuous.
\end{definition}

Recall that a profinite space is a totally disconnected compact Hausdorff space, where a compact Hausdorff space is totally disconnected if we can separate any two points by subsets that are both open and closed at the same time.

\begin{remark}
The choice of definition of a profinite rack here is more debatable than in the case of profinite groups. Rubinsztein~\cite[Def.~2.1]{Rubinsztein}, for instance, defined topological racks as racks~$R$ together with a rack structure~\hbox{$\rhd\colon R\times R\to R$} that is continuous and such that all left-multiplication~$\lambda_x\colon R\to R$ are homeomorphisms. Of course, for a compact Hausdorff space, such as a profinite space, continuous bijections are homeomorphisms, so this requirement is automatic. In fact, more is true. As the rack structure~\hbox{$\rhd$} is continuous if and only its adjoint~$R\to\Homeo(R)$ to the topological group~$\Homeo(R)$ of self-homeomorphisms is continuous~(see~\cite[Thm.~5.1.8]{Laures--Szymik} for instance) we can use the continuity of the inversion in the topological group~$\Homeo(R)$, if~$R$ is compact Hausdorff, to deduce the continuity of the inverted operation~$\rhd^{-1}\colon R\times R\to R$ given by~$x\rhd^{-1}y=\lambda_x^{-1}(y)$. This recovers that our profinite racks satisfy Takahashi's more restrictive definition~\cite[Def.~2.7]{Takahashi} of a topological rack. Singh~\cite[Def.~3.6]{Singh} defines profinite racks as inverse limits of finite racks, and these examples are profinite racks in our sense. Overall, we find the definition chosen above to be just right for the context of the present paper.
\end{remark}

\begin{remark}
By definition, racks bring their own automorphisms, namely the left-multiplications, one for each element~$x$. Still, there is also a~\textit{natural} automorphism, namely~$x\mapsto x\rhd x$, which is compatible with all rack morphisms and which generates all natural automorphisms~\cite[Thm.~5.4]{Szymik:1}, and which does not necessarily have the form of a left-multiplication. 
\end{remark}

\begin{definition}
If all natural automorphisms of a rack are the identity, then the rack is called a~\textit{quandle}. In symbols, this property means
\begin{equation}
x\rhd x=x
\end{equation}
for all elements~$x$. A \textit{profinite quandle} is a profinite rack that is a quandle.
\end{definition}

\begin{examples} 
The underlying set of any group forms a quandle under conjugation, i.e., if we define~\hbox{$g\rhd x=gxg^{-1}$}. Any profinite group gives rise to a profinite quandle in this way.
\end{examples}

\begin{definition}
A rack is called~\textit{involutory} if all left-multiplications have order at most two. It is equivalent to asking that the equation
\begin{equation}
x\rhd(x\rhd y)=y
\end{equation}
holds for all~$x$ and~$y$. 
\end{definition}

\begin{remark}
An involutory quandle is sometimes called a {\em kei}, following Taka\-saki~\cite{Takasaki}. The term{\it~Takasaki quandle} is sometimes used in his honour. Our present terminology follows Joyce~\cite{Joyce}. Fenn and Rourke~\cite[Sec.~1.3]{Fenn+Rourke} call such structures{\it~involutive}, but according to Brieskorn~\cite[Sec.~2]{Brieskorn}, this term refers to a completely different property that we will not need here. We refer to~\cite[Sec.~10f.]{Joyce} for more information on involutory quandles.
\end{remark}

\begin{remark}
If~$G$ is a group, the formula~$g\rhd x=gx^{-1}g$ defines a binary operation that turns the whole underlying set of~$G$ into an involutory quandle. This class of examples seems to originate in Loos' work~\cite{Loos} on the structure of symmetric spaces. As a particular case, if~$G$ is the Euclidean plane, with the group structure given by addition, then we have~\hbox{$g\rhd x=2g-x$}, so that left-multiplication is reflection in the point~$g$. Figure~\ref{fig:point_reflections} illustrates the rack axiom. 
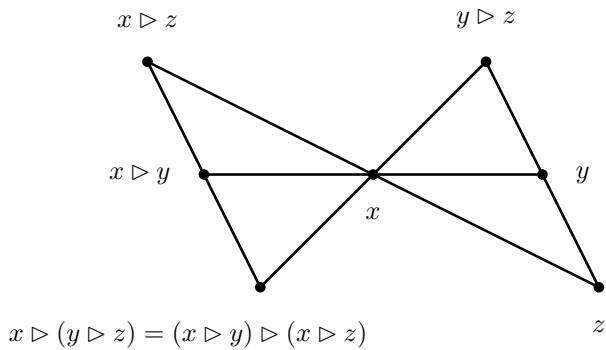
\begin{figure}
\begin{center}
\begin{tikzpicture}[scale=.75, line width=1pt, label distance=5pt]
\draw[black] (2,0)--(6,4);
\draw[black] (1,2)--(7,2);
\draw[black] (0,4)--(8,0);
\draw[black] (0,4)--(2,0);
\draw[black] (6,4)--(8,0);
\filldraw[black] (4,2) circle (2pt) node[label={270:{$x$}}] {};
\filldraw[black] (7,2) circle (2pt) node[label={0:{$y$}}] {};
\filldraw[black] (8,0) circle (2pt) node[label={270:{$z$}}] {};
\filldraw[black] (6,4) circle (2pt) node[label={90:{$y\rhd z$}}] {};
\filldraw[black] (0,4) circle (2pt) node[label={90:{$x\rhd z$}}] {};
\filldraw[black] (1,2) circle (2pt) node[label={180:{$x\rhd y$}}] {};
\filldraw[black] (2,0) circle (2pt) node[label={270:{$x\rhd (y\rhd z)=(x\rhd y)\rhd(x\rhd z)\phantom{nothing here}$}}] {};
\end{tikzpicture}
\end{center}
\caption{The rack axiom~\eqref{eq:rack_axiom} for symmetric spaces}
\label{fig:point_reflections}
\end{figure}
This particular rack is an involutory quandle. Bachmann~\cite{Bachmann:AGS,Bachmann:ESG}, building on earlier work of Hjelmslev, has used groups generated by involutions as a foundation for geometry in a way that is distinctly different from Coxeter's better-known work on reflection groups.
\end{remark}

\begin{examples} 
For any group~$G$, the subset
\begin{equation}\label{eq:Inv}
\Inv(G)=\{s\in G\mid s^2=e\not=s\}
\end{equation}
of~\textit{involutions} in~$G$ forms an involutory quandle with respect to conjugation. We emphasise that this convention excludes the identity from being an involution. If~$G$ is profinite, and~$\Inv(G)$ is a closed subset, then~$\Inv(G)$ will also be profinite. We shall see below, in Lemma~\ref{lem:closed}, that this is always the case in the situations we are interested in.
\end{examples}

To understand an algebraic structure, it is vital to understand its free models. In fact, any algebraic theory is determined by its free models~(see Lawvere~\cite{Lawvere}), but we will not use the language of algebraic theories here except for in Remark~\ref{rem:Lawvere} further down. However, we do need to discuss the free models in the relevant examples. For the theory of groups, for instance, we have the free groups~$\rmF(S)$ generated by sets~$S$, and if~$S$ has~$n$ elements, then the elements of~$\rmF(S)$ are in bijection with the natural operations on groups of arity~$n$. Specifically, if~\hbox{$S=\{x,y\}$}, then the multiplication corresponds to the element~\hbox{$xy\in\rmF\{x,y\}$}, whereas~$xyx^{-1}\in\rmF\{x,y\}$ gives conjugation.  For the theory of quandles, we can describe the free models as follows.

\begin{proposition}
The free quandle~$\FQ(S)$ on a set~$S$ is the union of the conjugacy classes of the basis elements~$S\subseteq\rmF(S)$ inside the free group~$\rmF(S)$; the operation~$\rhd$ is given by conjugation.
\end{proposition}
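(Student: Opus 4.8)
The plan is to verify that the set $X=\bigcup_{s\in S}\{\,wsw^{-1}\mid w\in\rmF(S)\,\}$, equipped with the conjugation operation $g\rhd x=gxg^{-1}$ inherited from the free group $\rmF(S)$, satisfies the universal property of the free quandle on $S$. First I would record that $X$ is genuinely a quandle: if $x\in X$ and $y=vtv^{-1}$ with $t\in S$, then $x\rhd y=(xv)t(xv)^{-1}$ is again a conjugate of a basis element, so $X$ is closed under $\rhd$; being a subset of the conjugation quandle on $\rmF(S)$ that is closed under its own left-multiplications, it inherits all the quandle axioms, including idempotency. I would also note that $X$ is generated by $S$ under $\rhd$ and the inverse operation $\rhd^{-1}$ (which exists by the rack axiom), since every element $wsw^{-1}$ with $w=s_{i_1}^{\varepsilon_1}\cdots s_{i_k}^{\varepsilon_k}$ is obtained from $s$ by successively applying the maps $\lambda_{s_{i_j}}^{\varepsilon_j}$.

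For the universal property, let $Q$ be any quandle and $f\colon S\to Q$ a map. By the universal property of the free group, the assignment $s\mapsto\lambda_{f(s)}\in\Aut(Q)$ extends to a unique group homomorphism $\rho\colon\rmF(S)\to\Aut(Q)$. I would then define $\phi\colon X\to Q$ by $\phi(wsw^{-1})=\rho(w)(f(s))$, the guiding idea being that conjugation by a generator in $\rmF(S)$ must be sent to left-multiplication by its image in $Q$.

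The main obstacle is well-definedness, since the presentation $x=wsw^{-1}$ is not unique. Suppose $w_1s_1w_1^{-1}=w_2s_2w_2^{-1}$ in $\rmF(S)$. Passing to the abelianisation shows that distinct basis elements are never conjugate, so $s_1=s_2=:s$; and since the centraliser of a nontrivial element of a free group is the cyclic subgroup it generates, $w_2^{-1}w_1=s^{\,n}$ for some $n\in\bbZ$. Applying $\rho$ gives $\rho(w_2)^{-1}\rho(w_1)=\lambda_{f(s)}^{\,n}$, and the crucial input is the quandle idempotency $\lambda_{f(s)}(f(s))=f(s)\rhd f(s)=f(s)$, which forces $\lambda_{f(s)}^{\,n}(f(s))=f(s)$ and hence $\rho(w_1)(f(s))=\rho(w_2)(f(s))$. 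This is exactly the point where the quandle axiom, rather than the mere rack axiom, is indispensable. That $\phi$ is a morphism then follows from the identity $\lambda_{\alpha(a)}=\alpha\lambda_a\alpha^{-1}$, valid for any quandle automorphism $\alpha$: taking $\alpha=\rho(w)$ and $a=f(s)$ yields $\lambda_{\phi(x)}=\rho(wsw^{-1})$, from which $\phi(x\rhd y)=\phi(x)\rhd\phi(y)$ drops out after writing $x\rhd y=(wsw^{-1}v)\,t\,(wsw^{-1}v)^{-1}$.

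Finally, $\phi$ restricts to $f$ on $S$, because $s=e\cdot s\cdot e^{-1}$ gives $\phi(s)=f(s)$. For uniqueness I would use that any quandle morphism automatically commutes with $\rhd^{-1}$ as well as $\rhd$, so it is determined by its values on a generating set; since $S$ generates $X$ and any extension of $f$ is forced on $S$, two such extensions coincide. Together these steps identify $X$ with $\FQ(S)$, with $\rhd$ given by conjugation, as claimed.
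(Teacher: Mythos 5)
Your argument is correct, and it is worth noting that the paper itself offers no proof at all for this proposition: it simply defers to Joyce (Thm.~4.1) and Kr\"uger (Satz~2.5). What you have written is a complete, self-contained verification of the universal property, and the three group-theoretic inputs you isolate are exactly the right ones: (i) conjugacy is invisible in the abelianisation, so distinct basis elements of $\rmF(S)$ are never conjugate; (ii) the centraliser of a basis element $s$ in $\rmF(S)$ is $\langle s\rangle$, so the ambiguity in writing $x=wsw^{-1}$ is precisely $w\mapsto ws^{n}$; and (iii) the idempotency axiom $q\rhd q=q$ kills that ambiguity after applying $\rho$. Your remark that (iii) is where ``quandle'' rather than ``rack'' is indispensable is exactly the right diagnosis --- for free racks the analogous statement fails, and the free rack on $S$ is instead $S\times\rmF(S)$ with a twisted operation, because conjugation by $s^{n}$ is not forgotten. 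Two small points you glossed over but which are easily repaired: to know that $X$ is a subquandle on which the left-multiplications are bijections you need closure under $\rhd^{-1}$ as well as $\rhd$ (you use this implicitly when you invoke $\rhd^{-1}$ for generation and uniqueness, and it holds since $x^{-1}(vtv^{-1})x=(x^{-1}v)t(x^{-1}v)^{-1}$); and the identity $\lambda_{\alpha(a)}=\alpha\lambda_{a}\alpha^{-1}$ requires $\rho(w)$ to be a quandle automorphism of $Q$, which it is, being a composite of the automorphisms $\lambda_{f(s)}^{\pm1}$. With those two sentences added, your proof stands on its own and could replace the citation in the paper.
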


We refer to~\cite[Thm.~4.1]{Joyce} and~\cite[Satz~2.5]{Krueger} for proofs. 

We can also describe free involutory quandles explicitly in a similar manner. We will have to replace the free groups~$\rmF(S)$, which we can think of as the universal Artin groups, with the universal Coxeter groups. Recall that an Artin group can be presented in the form~$\langle S|R\rangle$, where~$S$ is a set of generators, and~$R$ is a set of relations of the form~\hbox{$sts...=tst...$} for~$s\not=t$ in~$S$. The~\textit{universal} Artin group corresponds to the case~$R=\emptyset$, i.e., just the free group generated by the set~$S$. Coxeter groups have the additional relations~$s^2=e$ for all generators~$s$ in~$S$, so that the~\textit{universal} Coxeter group~\cite{Muhlherr} on a set~$S$ of generators is the quotient
\[
\rmF_2(S)=\langle\, s\in S\mid s^2=e\text{ for all }s\in S\,\rangle
\]
of the free group~$\rmF(S)$ by the relations that say that the generating elements in~$S$ are involutions. When the set~$S$ has~$n$ elements, then the group~$\rmF_2(S)\cong(\bbZ/2)^{\star n}$ is the coproduct of~$n$ copies of the group of order~$2$. 

\begin{proposition}
The free involutory quandle~$\FQ_2(S)$ on a set~$S$ is the union of the conjugacy classes of the basis elements~$S\subseteq\rmF_2(S)$ inside the universal Coxeter group~$\rmF_2(S)$.
\end{proposition}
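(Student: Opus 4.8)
The plan is to run the argument for the preceding proposition almost verbatim, replacing the free group $\rmF(S)$ by the universal Coxeter group $\rmF_2(S)$, and to isolate the single place where the involutory hypothesis does the work. First I would check that
\[
Q=\bigcup_{s\in S}\{\,wsw^{-1}\mid w\in\rmF_2(S)\,\}
\]
is a subquandle of the conjugation quandle on $\rmF_2(S)$ that is, moreover, involutory: every element of $Q$ is conjugate to some generator $s$ and hence, because $s^2=e$ in $\rmF_2(S)$, is itself an involution, so $\lambda_a$ has order at most two for each $a\in Q$, while idempotency and the rack axiom are inherited from conjugation. I would also record that distinct generators are not conjugate in $\rmF_2(S)$ (they have distinct images in the abelianisation), so that $Q$ is the disjoint union of the conjugacy classes~$[s]$ and each $a\in Q$ unambiguously remembers the generator it comes from.

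For the universal property, let $T$ be any involutory quandle and $f\colon S\to T$ a map. The essential point---and the only one where the passage from $\rmF(S)$ to $\rmF_2(S)$ matters---is that in an involutory quandle each left multiplication satisfies $\lambda_t^2=\id$, so the assignment $s\mapsto\lambda_{f(s)}$ is a function from $S$ to the set of elements of order at most two in $\Aut(T)$. By the defining universal property of the universal Coxeter group, it therefore extends uniquely to a homomorphism $\rho\colon\rmF_2(S)\to\Aut(T)$; the relation $s^2=e$ is respected precisely because $T$ is involutory. I would then define $\bar f\colon Q\to T$ on the class of $s$ by $\bar f(wsw^{-1})=\rho(w)(f(s))$. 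Granting well-definedness, it extends $f$ (take $w=e$) and is a quandle morphism: using the identity $\lambda_{\phi(x)}=\phi\lambda_x\phi^{-1}$, valid for any quandle automorphism $\phi$, together with $\rho(s)=\lambda_{f(s)}$, a short computation gives $\bar f(a\rhd b)=\bar f(a)\rhd\bar f(b)$. Uniqueness is automatic, since $S\subseteq Q$ generates $Q$ under the operations $\lambda_s$, and any extension of $f$ must send $\lambda_s$ to $\lambda_{f(s)}=\rho(s)$.

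The one genuinely new verification---and the step I expect to be the main obstacle---is the well-definedness of $\bar f$, that is, the independence of $\rho(w)(f(s))$ from the representative~$w$. If $wsw^{-1}=w'sw'^{-1}$ then $w'^{-1}w$ lies in the centraliser of $s$ in $\rmF_2(S)$, and here I would invoke the structure of free products: the centraliser of the generator $s$ of the free factor $\langle s\rangle\cong\bbZ/2$ is exactly $\{e,s\}$. When $w'^{-1}w=e$ there is nothing to prove, while when $w'^{-1}w=s$ one has $w=w's$ and hence $\rho(w)(f(s))=\rho(w')\lambda_{f(s)}(f(s))=\rho(w')(f(s)\rhd f(s))=\rho(w')(f(s))$, the last equality being the quandle idempotency $f(s)\rhd f(s)=f(s)$ in $T$. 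Thus both defining features of an involutory quandle are used exactly once: the involutory axiom to produce $\rho$, and idempotency to make $\bar f$ well defined. This establishes the universal property and identifies $\FQ_2(S)$ with $Q$.
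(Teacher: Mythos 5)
Your proof is correct, but it takes a genuinely different route from the paper, which offers no argument of its own for this proposition and instead defers to~\cite[Cor.~10.3]{Joyce} and~\cite[Satz~2.5]{Krueger}. Each of your steps checks out: the union~$Q$ of the conjugacy classes of the generators is an involutory subquandle of the conjugation quandle because all its elements square to~$e$; distinct generators are separated by the abelianisation~$(\bbZ/2)^{(S)}$, so each element of~$Q$ determines its generator; the universal property of~$\rmF_2(S)$ applies to~$s\mapsto\lambda_{f(s)}$ exactly because the target quandle is involutory; the centraliser of~$s$ in the free product~$\rmF_2(S)$ is~$\{e,s\}$ (standard free-product structure theory, cf.~\cite{Magnus+Karrass+Solitar}), so the only ambiguity in the expression~$wsw^{-1}$ is~$w\mapsto ws$, which idempotency in the target kills; and~$\lambda_{\phi(x)}=\phi\lambda_x\phi^{-1}$ gives the morphism property. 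It is worth noting that your argument is precisely the discrete template of the proof the paper \emph{does} write out later for the pro-finite analogue (Proposition~\ref{prop:universal}), which runs on the same three ingredients: conjugates of generators, the centraliser~$\{e,s\}$, and idempotency (``$\phi(s)$ acts trivially on itself''). One structural difference: the paper's pro-finite statement starts from~$\Inv(\hatF_2(S))$ and therefore needs the Kurosh-type theorem to know every involution is conjugate to a generator, whereas your~$Q$ is defined as the union of conjugacy classes, so Kurosh can be postponed to the separate statement Proposition~\ref{prop:FQ_2}. The only place where your write-up could be one sentence more explicit is the uniqueness step: writing~$w=s_1\cdots s_n$ and using~$s_i^{-1}=s_i$, every element~$wsw^{-1}$ of~$Q$ equals~$s_1\rhd(s_2\rhd\cdots(s_n\rhd s))$, which is exactly what ``$S$ generates~$Q$ under the operations~$\lambda_s$'' means. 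What your route buys is a self-contained proof that displays exactly where involutivity and idempotency enter; what the paper's citation buys is brevity.
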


We refer to~\cite[Cor.~10.3]{Joyce} and~\cite[Satz~2.5]{Krueger} once more. 

\begin{remark}
We remark that the group~$\rmF_2(S)$ is not defined by universal identities or laws; the equation~$s^2=e$ is required for the generators~$s$ only. It does not necessarily hold for every group element. If we require~$g^2=e$ for all elements~$g$ in a group~$G$, the group~$G$ must be abelian. In fact, this law defines the variety of elementary abelian~$2$--groups, and~$\rmF_2(S)$ is not abelian unless~$S$ has at most one element. So, while the functor~$\rmF_2$ from the category of sets to the category of groups is a left-adjoint, it is not left-adjoint to the canonical forgetful functor. In contrast, the involutory quandle~$\FQ_2(S)$ is characterised as a left-adjoint to the forgetful functor from the category of involutory quandles; it is a free object among involutory quandles.
\end{remark}

\begin{proposition}\label{prop:FQ_2}
We have~$\FQ_2(S)=\{\,g\in\rmF_2(S)\mid g^2=e\not=g\,\}$. 
\end{proposition}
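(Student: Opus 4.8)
The plan is to feed the previous proposition—which identifies $\FQ_2(S)$ with the union of the conjugacy classes of the generators $s\in S$ inside $\rmF_2(S)$—into the structure theory of the universal Coxeter group $\rmF_2(S)$, a free product of copies of $\bbZ/2$. Writing the right-hand side as $\Inv(\rmF_2(S))=\{g\in\rmF_2(S)\mid g^2=e\neq g\}$, I must establish the two inclusions.

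The inclusion $\FQ_2(S)\subseteq\Inv(\rmF_2(S))$ is the easy direction. Each generator $s$ satisfies $s^2=e$ and $s\neq e$ by the defining relations, so it is an involution; and if $g$ is an involution, then so is any conjugate $hgh^{-1}$, since $(hgh^{-1})^2=hg^2h^{-1}=e$ and $hgh^{-1}\neq e$. Hence every element of a conjugacy class of a generator lies in $\Inv(\rmF_2(S))$.

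For the reverse inclusion I would argue through normal forms in the free product. Every element of $\rmF_2(S)$ has a unique expression as a reduced word $s_{i_1}\cdots s_{i_k}$ with $s_{i_j}\neq s_{i_{j+1}}$ for all $j$, consecutive equal generators being deleted because $s^2=e$. Repeatedly conjugating by the first letter whenever the first and last letters coincide (which shortens the word by two, as $s_{i_1}^{-1}=s_{i_1}$) replaces a given $g$ by a conjugate \emph{cyclically reduced} word $w$, that is, one that is reduced and whose first and last letters also differ once its length is at least $2$. The key computation is that for such a $w$ of length $k\geq2$ the product $w\cdot w$ admits no cancellation at the seam, the last letter of $w$ differing from the first, so $w^2$ is already reduced of length $2k\geq4$ and in particular $w^2\neq e$. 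Therefore a cyclically reduced involution must have length at most $1$, and being nontrivial it has length exactly $1$; that is, $w=s$ is a single generator. Consequently the original involution $g$ is conjugate to a generator and lies in $\FQ_2(S)$.

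I expect the normal-form bookkeeping—passing to a cyclically reduced conjugate and verifying the absence of cancellation at the seam of $w^2$—to be the only genuine content, everything else being formal. One may shortcut it by invoking the standard torsion theorem for free products, which asserts that every element of finite order in a free product is conjugate into one of the factors; since here each factor $\bbZ/2=\{e,s\}$ contains only the single nontrivial element $s$, the same conclusion follows at once. The self-contained argument has the advantage of being elementary and of making transparent why the universal Coxeter group, rather than an arbitrary group, is the correct ambient object.
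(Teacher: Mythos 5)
Your proposal is correct, and its overall structure is the same as the paper's: the easy inclusion (conjugates of generators are involutions) plus the converse that every involution in~$\rmF_2(S)$ is conjugate to a generator. The difference is one of completeness: the paper's proof dispatches the converse in one sentence, calling it ``elementary to verify'' and citing the torsion theorem for free products (a consequence of the Kurosh subgroup theorem, \cite[Cor.~4.1.4]{Magnus+Karrass+Solitar}), whereas you actually carry out the elementary verification---passing to a cyclically reduced conjugate and checking that the square of a cyclically reduced word of length~$k\geq 2$ is reduced of length~$2k$, hence nontrivial, so that any nontrivial involution is conjugate to a length-one word, i.e.\ a generator. Your normal-form argument is sound (the seam $s_{i_k}s_{i_1}$ indeed admits no cancellation, and cyclic reduction cannot terminate at the empty word since $g\neq e$), and the shortcut you mention at the end is exactly the citation the paper uses. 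So you have, in effect, written out the proof the paper leaves to the reader and the literature; this self-contained version is what one would want if the proposition were to stand on its own.
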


\begin{proof}
Clearly, every conjugate of a generator~$s\in S$ in~$\rmF_2(S)$ is an involution. The converse also holds: Every involution is conjugate to a generator. This is elementary to verify, and it is also a consequence of the Kurosh subgroup theorem; see~\cite[Cor.~4.1.4]{Magnus+Karrass+Solitar}, for instance.
\end{proof}

\begin{remark}\label{rem:Lawvere}
We can rephrase the observation leading to Proposition~\ref{prop:FQ_2} in more conceptual terms as follows. In any category with finite sums, we can take the full subcategory whose objects are the finite sums of any chosen object~$Z$, and this will be an algebraic theory in the sense of Lawvere~\cite{Lawvere}. For instance, if we consider the category of models for an algebraic theory and choose for~$Z$ a free model on one generator, we recover that algebraic theory in this way. In particular, for the group~\hbox{$Z=\bbZ$}, the free groups recover the algebraic theory of groups. More generally, we now see that any group~$Z$ defines an algebraic theory such that the sets~$\Mor(Z,G)$ of group morphisms from~$Z$ into groups~$G$ are models for that theory. For the case~\hbox{$Z=\bbZ/2$} of interest to us here, we see that the sets~\hbox{$\Mor(\bbZ/2,G)\cong\Inv(G)\cup\{e\}$} support models for an algebraic theory: the theory of involutory quandles with unit, where a unit~$u$ is an element that is both fixed and fixing:~$x\rhd u=u$ and~$u\rhd y =y$ for all~$x,y$~(see~\cite[Def.~2.8]{Lawson--Szymik} for this terminology). In fact, the theory defined by~$Z=\bbZ/2$ is the theory of involutory quandles with unit. This insight goes back to Bergman~(see his~ICM talk~\cite{Bergman:ICM} for general statements and his textbook~\cite[Sec.~10.11]{Bergman:book} for details).
\end{remark}

For the Galois theory context, we also need the profinite versions~$\hatF(S)$ and~$\hatF_2(S)$ of the groups~$\rmF(S)$ and~$\rmF_2(S)$, respectively, where~$S$ can now be any profinite space. If the space~$S$ is finite, we can build the profinite groups~$\hatF(S)$ as the profinite completions of the free groups~$\rmF(S)$, namely the limit~$\lim_N\rmF(S)/N$, where~$N$ ranges over the finite index normal subgroups of~$\rmF(S)$. If~$S$ is, more generally, profinite, we have two choices: On the one hand, we can set~\hbox{$\hatF(S)=\lim_N\rmF(S)/N$} as before, but require that~\hbox{$gN\cap S$} is open in~$S$ for the (finitely many) cosets~$gN$ of~$N$ in~$\rmF(S)$. On the other hand, we can set~$\hatF(S)=\lim_R\hatF(S/R)$, where~$R$ ranges over the equivalence relations on~$S$ that have a finite discrete quotient. Both constructions give isomorphic results~\cite[Prop.~1.7]{Gildenhuys--Lim}. This and other suitable references come from Gildenhuys~et~al.~\cite{Gildenhuys--Lim, Gildenhuys--Ribes}, which build on earlier work of Neukirch~et~al.~\cite{Neukirch:1,Neukirch:2,Binz+Neukirch+Wenzel}, Mac Lane~\cite{Mac_Lane}, and the writings of Haran and Jarden~\cite{Haran--Jarden:Pisa,Haran--Jarden:JPAA}. The last two are especially relevant because of the next result, Proposition~\ref{prop:profinite_Kurosh}, which they provide. They concern the groups~$\hatF_2(S)$ that are characterised by the property that a continuous homomorphism~$\hatF_2(S)\to G$ into a profinite group~$G$ is the same thing as a continuous map~$S\to G$ such that the elements in the image square to the identity element~$e$. To construct them, we can follow~\hbox{\cite[\S1 and Prop.~3.1]{Haran--Jarden:JPAA}} and use~$\hatF_2(S)=\lim_N\rmF(S)/N$ with the added requirement that~$N$ contains all squares of elements in~$S$ or work with~$\hatF_2(S)=\lim_R\hatF_2(S/R)$. There are profinite versions of the Kurosh subgroup theorem:

\begin{proposition}\label{prop:profinite_Kurosh}
Every involution in~$\hatF_2(S)$ is conjugate to a unique element in~$S$, and the centraliser of any involution consists of only that involution and the identity.
\end{proposition}

\begin{proof}
If~$S$ is a finite set of~$n$ elements, Haran--Jarden~\cite[Prop.~6.1]{Haran--Jarden:Pisa} show that there are exactly~$n$ conjugacy classes of involutions in~$\hatF_2(S)$, represented by the elements in~$S$. In~\cite[Cor.~3.2]{Haran--Jarden:JPAA}, they extend this to the general case.
\end{proof}

\begin{proposition}\label{prop:universal}
For every profinite space~$S$, there is a profinite involutory quandle~$\hatFQ_2(S)$ together with a continuous map~$S\to\hatFQ_2(S)$ that satisfies the universal property: Restriction gives a bijection between the set of continuous homomorphism~$\hatFQ_2(S)\to Q$ into a profinite involutory quandle~$Q$ and the set of continuous maps~\hbox{$S\to Q$}. Furthermore, there is a continuous map~$\hatFQ_2(S)\to S$ that identifies the target~$S$ with the orbit space of the~$\hatF_2(S)$--action on~$\hatFQ_2(S)$ via conjugation.
\end{proposition}

\begin{proof}
Let us set
\begin{equation}\label{eq:FQ=Inv(F)}
\hatFQ_2(S)=\Inv(\hatF_2(S)).
\end{equation}
This is a profinite quandle: the involutions form a closed subset of the topological space~$\hatF_2(S)$ as they are the image of the continuous map~\hbox{$\hatF_2(S)\times S\to \hatF_2(S)$} that sends~$(g,s)$ to~$gsg^{-1}$. The involutory quandle~$\hatFQ_2(S)$ comes with a natural continuous map from~$S$ because the image of the canonical map~$S\to\hatF_2(S)$ lies inside~$\hatFQ_2(S)$. So, any continuous morphism~$\hatFQ_2(S)\to Q$ from~$\hatFQ_2(S)$ into a profinite involutory quandle~$Q$ restricts to a continuous map~$S\to Q$. 

Conversely, if~\hbox{$\phi\colon S\to Q$} is any continuous map, we can extend it functorially to a continuous morphism~\hbox{$\hatF_2(\phi)\colon\hatF_2(S)\to\hatF_2(Q)$} of groups, and this restricts to a continuous morphism~\hbox{$\hatFQ_2(\phi)\colon\hatFQ_2(S)\to\hatFQ_2(Q)$} of involutory quandles. We get a continuous morphism~$\hatFQ_2(S)\to Q$ by composition of this morphism~$\hatFQ_2(\phi)$ with the continuous orbit map~$\hatFQ_2(Q)\to Q$, where~$Q$ is identified with the space of orbits in the subspace~$\hatFQ_2(Q)=\Inv(\hatF_2(Q))$ for the continuous action of the group~$\hatF_2(Q)$ on itself via conjugation. The preceding two constructions are inverse to each other.

The final statement is immediate from what we have already used in the previous paragraph of the proof: The composition~\hbox{$S\longrightarrow\hatFQ_2(S)\longrightarrow\hatFQ_2(S)/\hatF_2(S)$}
is a homeomorphism, and we can use its inverse to obtain the desired map.
\end{proof}

\begin{remark}\label{rem:old_4.2}
We remark that any free quandle generates a free group: Ignoring topologies for a moment, the~\textit{enveloping group}~$\Gr(Q)$ of a quandle~$Q$ is, by definition, the value of the left-adjoint~$\Gr$ to the forgetful functor from the category of groups to the category of quandles. To construct it, up to isomorphism, we can start with the free group~$\rmF(Q)$ on the set~$Q$, with~$f(q)$ the basis element corresponding to an element~\hbox{$q\in Q$}, and pass to the quotient by the relations
\begin{equation}\label{eq:relations}
f(x\rhd y)=f(x)f(y)f(x)^{-1}.
\end{equation}
As a left adjoint, the functor~$\Gr$ preserves colimits, and this shows that we have an isomorphism~\hbox{$\Gr(\FQ(S))\cong\rmF(S)$}. The construction can be adapted to profinite quandles~$Q$. Ignoring the quandle structure on~$Q$ for a moment, the profinite space~$Q$ freely generates the profinite group~$\hatF(Q)$, and~$\hatGr(Q)$ is the quotient of it by the closed normal subgroup generated by the relations~\eqref{eq:relations}. Similarly, the enveloping group of the quandle~$\hatFQ_2(S)$ is canonically isomorphic to the group~$\hatF_2(S)$ for formal reasons; both sides satisfy the same universal property. All that is to point out that, in contrast, the proof of Theorem~\ref{thm:AS(QQ)} will go the other way. We will argue for the freeness of a quandle from the freeness of its enveloping group, and such an implication is not just a categorical formality. Any argument to deduce the freeness of a quandle~$Q$ from the freeness of its enveloping group has to be non-formal.
\end{remark}

\begin{remark}\label{rem:old_4.3}
We shall also use the fact that the map~$\hatFQ_2(S)\to\hatF_2(S)$ is injective. This is not automatic, either. There are quandles~$Q$ for which the canonical map from~$Q$ into the enveloping group is not injective. For example, if~$Q=\{x,y,z\}$ is the involutory quandle with~$\lambda_x$ transposing~$y$ and~$z$, and~$\lambda_y=\lambda_z=\id_Q$, then~$y\rhd x=x$ implies that~$x$ commutes with~$y$ in the enveloping group, and~$x\rhd y=z$ implies that~$x$ conjugates~$y$ into~$z$ in the enveloping group. Therefore, the elements~$y$ and~$z$ must become equal in the enveloping group.
\end{remark}


\section{Artin--Schreier quandles}\label{sec:AS(F)}

With this section, we turn our attention to fields. We start by establishing notation and context. Suitable references for this material are Lam~\cite{Lam} and Knebusch--Scheiderer~\cite{Knebusch--Scheiderer}.

We recall that an~\textit{ordering} of a field~$F$ is given by a subset~$P\subset F$ of elements that is closed under addition and multiplication and satisfies~$P\cap-P=\{0\}$ and~\hbox{$P\cup-P=F$}, where~\hbox{$-P=\{a\in F\mid -a\in P\}$}. It follows that all sums of squares are in~$P$. A field is~\textit{formally real} if and only if we can order it. A~\textit{Euclidean field} is a field for which the set of squares is an ordering. In a Euclidean field, the ordering is necessarily unique. A field is~\textit{real closed} if, in addition, each polynomial of odd degree with coefficients in~$F$ has at least one root in~$F$.

The~\textit{real spectrum} of a field~$F$ is the set~$\Ord(F)$ of orderings of~$F$ equipped with the~\textit{Harrison topology}: given any element~$a\not=0$ in~$F$, the set of orderings for which~$a$ is positive is a generating open set. With this topology, the real spectrum is a profinite space. 

The~\textit{Grothendieck--Witt ring}~$\GW(F)$ of a field~$F$ (of characteristic not~$2$) is the group completion of the semi-ring of non-degenerate quadratic forms. The dimension gives a well-defined homomorphism~\hbox{$\GW(F)\to\bbZ$}. 
The~\textit{Witt ring}~$\rmW(F)$ is the quotient of the Grothendieck--Witt ring~$\GW(F)$ by the ideal generated by the hyperbolic plane. The parity of the dimension gives a well-defined homomorphism~$\rmW(F)\to\bbZ/2$. 

\begin{proposition}\label{prop:Burnside}
The real spectrum of a field~$F$ is in natural bijection with the set of ring morphisms~\hbox{$\rmW(F)\to\bbZ$} from the Witt ring of~$F$ to the ring of integers.
\end{proposition}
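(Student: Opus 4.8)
The plan is to construct mutually inverse maps between~$\Ord(F)$ and the set of ring homomorphisms~$\rmW(F)\to\bbZ$. In the forward direction I would send an ordering~$P$ to its \emph{signature}~$\sigma_P$. Given a non-degenerate form, diagonalise it as~$\langle a_1,\dots,a_n\rangle$ and let~$\sigma_P(\langle a_1,\dots,a_n\rangle)$ be the number of indices with~$a_i\in P$ minus the number with~$a_i\in-P$. The first thing to check is that this does not depend on the diagonalisation; I would verify this by extending scalars to the real closure~$F_P$ of~$(F,P)$, where every nonzero element is~$\pm$ a square and the classical Sylvester law of inertia applies, so that~$\sigma_P$ is the ordinary signature of the form over~$F_P$. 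Additivity is then immediate from concatenating diagonalisations, and multiplicativity follows from~$\langle a\rangle\langle b\rangle=\langle ab\rangle$ together with the multiplicativity of signs. Since the hyperbolic plane~$\langle 1,-1\rangle$ has signature~$0$, the map~$\sigma_P$ kills the defining ideal and descends to a ring homomorphism~$\rmW(F)\to\bbZ$.

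For the inverse, let~$\phi\colon\rmW(F)\to\bbZ$ be a ring homomorphism. Since~$\langle a\rangle^2=\langle a^2\rangle=\langle 1\rangle$ in~$\rmW(F)$ and~$\phi(\langle 1\rangle)=1$, we get~$\phi(\langle a\rangle)\in\{\pm 1\}$ for every~$a\in F^\times$; moreover~$\langle-1\rangle=-\langle 1\rangle$ gives~$\phi(\langle-a\rangle)=-\phi(\langle a\rangle)$. I would then set
\[
P_\phi=\{0\}\cup\{\,a\in F^\times \mid \phi(\langle a\rangle)=1\,\}
\]
and verify the ordering axioms. The sign relation above yields at once~$P_\phi\cup-P_\phi=F$ and~$P_\phi\cap-P_\phi=\{0\}$, while closure under multiplication follows from~$\phi(\langle ab\rangle)=\phi(\langle a\rangle)\phi(\langle b\rangle)$.

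The step I expect to be the main obstacle is closure under addition. Here I would invoke the binary representation identity~$\langle a,b\rangle\cong\langle a+b,\,ab(a+b)\rangle$, valid whenever~$a+b\neq 0$, which holds because~$ax^2+by^2$ represents~$a+b$ and a binary form is pinned down by one represented value together with its determinant. Applying~$\phi$ gives
\[
\phi(\langle a\rangle)+\phi(\langle b\rangle)=\phi(\langle a+b\rangle)+\phi(\langle ab(a+b)\rangle),
\]
so for~$a,b\in P_\phi$ the left-hand side equals~$2$; as each summand on the right lies in~$\{\pm1\}$, both must be~$1$, and in particular~$a+b\in P_\phi$. This shows~$P_\phi$ is an ordering.

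Finally, the two constructions are mutually inverse: for an ordering~$P$ one has~$\sigma_P(\langle a\rangle)=1$ exactly when~$a\in P$, so~$P_{\sigma_P}=P$; and for a homomorphism~$\phi$ the ring maps~$\sigma_{P_\phi}$ and~$\phi$ agree on all generators~$\langle a\rangle$, hence everywhere, since these classes additively generate~$\rmW(F)$ and both maps are additive. Naturality in~$F$ follows from the functoriality of~$\rmW(-)$, and matching the Harrison subbasic open~$\{P\mid a\in P\}$ with~$\{\phi\mid\phi(\langle a\rangle)=1\}$ shows the bijection is even a homeomorphism. When~$F$ is not formally real there are no orderings, and correspondingly no ring homomorphism exists: in that case~$\rmW(F)$ is additively torsion, so its unit has finite order, which is incompatible with~$\phi(\langle 1\rangle)=1$.
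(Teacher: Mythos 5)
Your argument is correct, and in the forward direction it coincides with the paper's sketch: both send an ordering~$P$ to its~$P$--signature and obtain well-definedness by passing to the real closure of~$(F,P)$, where Sylvester's law applies (the paper phrases this as functoriality together with the canonical isomorphism between the Witt ring of a real closed field and~$\bbZ$). Where you genuinely go beyond the paper is the converse: the paper's proof is only a sketch of the map~$P\mapsto\sigma_P$ and leaves bijectivity implicit, whereas you construct the explicit inverse~$\phi\mapsto P_\phi$ and check the ordering axioms, the only delicate one being closure under addition, which you settle with the representation identity~$\langle a,b\rangle\cong\langle a+b,\,ab(a+b)\rangle$. That step is sound; the one point left tacit is that the hypothesis~$a+b\neq 0$ is harmless, since for nonzero~$a,b\in P_\phi$ one cannot have~$b=-a$ (because~$\phi(\langle -a\rangle)=-\phi(\langle a\rangle)$), and the degenerate case is covered by~$0\in P_\phi$ by definition. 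Your closing remarks---that the bijection is a homeomorphism for the Harrison topology, and that for non-formally-real~$F$ both sides are empty because~$\rmW(F)$ is additively torsion (Pfister)---exceed what the statement asks but are correct. In short: the two proofs share the same construction, but the paper buys brevity by delegating everything to the structure theory of Witt rings of real closed fields and stopping at a sketch, while yours is a self-contained argument that actually establishes the bijection, at the cost of invoking the binary representation theorem.
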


\begin{proof}[Sketch of proof]
Given any ordering~$P$ of~$F$, the associated morphism sends a quadratic form to its~$P$--signature---the difference of the numbers of positive and negative diagonal entries, where the meaning of~`positive' and~`negative' is determined by the ordering~$P$. Another way to think about this: an ordering determines an embedding into a real closed field so that we can use functoriality and the fact that the Witt ring of a real closed field is canonically isomorphic to~$\bbZ$ through the signature. We refer to~\cite[Thm.~I.2.3]{Knebusch--Scheiderer} for more detail.
\end{proof}

\begin{remark}
The kernel~$I$ of the parity homomorphism~$\rmW(F)\to\bbZ/2$ defines the~$I$--adic filtration on the Witt ring~$\rmW(F)$, and the associated graded ring is then~\hbox{$\gr\rmW(F)=\rmW(F)/I\oplus I/I^2\oplus I^2/I^3\oplus\dots$}. It is a fact that~$\gr\rmW(F)$ is a graded commutative algebra over~$\bbF_2$, and it is isomorphic to mod~$2$ Milnor~K-theory and mod~$2$ Galois cohomology via Milnor's conjecture, which is now a theorem~\cite{OVV}.
Therefore, we can recover~$\gr\rmW(F)$ from the absolute Galois group of~$F$. Can we recover~$\rmW(F)$ itself from a refinement of the absolute Galois group? This was a non-trivial question because there are examples of fields with isomorphic~$\gr\rmW(F)$'s but non-isomorphic~$\rmW(F)$'s~(see~\cite{Pfister:DMV}); it has been answered in the affirmative by Min\'a\v c and Spira in~\cite{Minac+Spira:1,Minac+Spira:2}. The authors show that the Witt ring determines and is determined~(modulo a small restriction) by a specific quotient of the absolute Galois group~(see~\cite[Thm.~3.8]{Minac+Spira:2} in particular).
\end{remark}

\begin{remark}
Half a century ago, Dress~\cite{Dress} found a relation between Witt rings and Burnside rings: 
If~$E|F$ is a Galois extension with Galois group~$G$, then there is a map~\hbox{$\rmA(G)\to\rmW(F)$} from the Burnside ring~$\rmA(G)$ of the Galois group to the Witt ring of the ground field that sends~$G/H$ to~$(E^H,\tr)$, where~$\tr$ is the usual trace form~\hbox{$(a,b)\mapsto\tr_{E|F}(ab)$}. 
\end{remark}

\begin{proposition}
There is an identification between the real spectrum of a field~$F$ and the set of conjugacy classes of involutions in its absolute Galois group~$\Gal(F)$.
\end{proposition}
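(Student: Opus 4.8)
The plan is to run the correspondence through the Artin--Schreier theory of real closed fields, realising both sides as data attached to the real closed subfields of a fixed algebraic closure~$\overline{F}$ of~$F$. Fix such an~$\overline{F}$, so that~$\Gal(F)=\Gal(\overline{F}|F)$. The central input is the Artin--Schreier theorem: a field~$R$ with~$1<[\overline{R}:R]<\infty$ is real closed, and then~$[\overline{R}:R]=2$ with~$\overline{R}=R(\sqrt{-1}\,)$. A real closed field carries a unique ordering, namely the one whose positive cone is the set of squares.

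First I would match involutions with real closed subfields. If~$\sigma\in\Inv(\Gal(F))$, then the group~$\{\id,\sigma\}$ is finite, so by Artin's theorem the extension~$\overline{F}|\Fix(\sigma)$ is Galois of degree~$2$; since~$\overline{F}$ is algebraically closed, the Artin--Schreier theorem shows~$\Fix(\sigma)$ is real closed. Conversely, any real closed subfield~$R$ with~$F\subseteq R\subseteq\overline{F}$ has~$\overline{F}=R(\sqrt{-1}\,)$, so~$\Gal(\overline{F}|R)$ has order~$2$ and its nontrivial element is an involution of~$\Gal(F)$. These assignments~$\sigma\mapsto\Fix(\sigma)$ and~$R\mapsto$~(nontrivial element of~$\Gal(\overline{F}|R)$) are mutually inverse, giving a bijection between~$\Inv(\Gal(F))$ and the set of real closed subfields of~$\overline{F}$ that contain~$F$.

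Next I would define the map to orderings by restriction. Send~$\sigma$ to the ordering~$P_\sigma$ of~$F$ obtained by intersecting with~$F$ the unique ordering of the real closed field~$\Fix(\sigma)$. To see this descends to conjugacy classes, note that for~$\tau\in\Gal(F)$ one has~$\Fix(\tau\sigma\tau^{-1})=\tau\bigl(\Fix(\sigma)\bigr)$, and~$\tau$ restricts to an~$F$-isomorphism of real closed fields; since~$\tau$ fixes~$F$ pointwise and the ordering on each side is unique, the induced orderings of~$F$ agree, so~$P_{\tau\sigma\tau^{-1}}=P_\sigma$. Surjectivity onto~$\Ord(F)$ is the existence of real closures: every ordering~$P$ of~$F$ extends to a real closure~$R\subseteq\overline{F}$, whose associated involution maps back to~$P$.

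The crux, and the step I expect to be the main obstacle, is injectivity on conjugacy classes, which rests on the uniqueness of the real closure. If~$P_\sigma=P_{\sigma'}$, then~$\Fix(\sigma)$ and~$\Fix(\sigma')$ are both real closures of the ordered field~$(F,P)$ inside~$\overline{F}$; the Artin--Schreier uniqueness theorem provides a unique order-preserving~$F$-isomorphism between them, and this extends to an automorphism~$\tau\in\Gal(F)$ of~$\overline{F}$ carrying~$\Fix(\sigma)$ to~$\Fix(\sigma')$. Under the bijection of the second step this forces~$\sigma'=\tau\sigma\tau^{-1}$, so~$\sigma$ and~$\sigma'$ are conjugate. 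Combining the three steps, the restriction map induces a bijection between the conjugacy classes of involutions in~$\Gal(F)$ and~$\Ord(F)$; matching the Harrison topology with the quotient topology on conjugacy classes is then a routine check that I would relegate to the surrounding discussion.
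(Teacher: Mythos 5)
Your proposal is correct and takes essentially the same approach as the paper: the ordering you obtain by restricting the unique ordering of the real closed field~$\Fix(\sigma)$ to~$F$ is exactly the paper's~$P[\sigma]=\{a\in F\mid\sigma(\sqrt{a})=\sqrt{a}\}$, since~$a$ is positive in~$\Fix(\sigma)$ precisely when it is a square there, i.e., when~$\sigma$ fixes its square roots. The paper only sketches the map, while your existence-and-uniqueness-of-real-closures argument supplies the bijectivity details it leaves implicit.
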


\begin{proof}[Sketch of proof]
Given a conjugacy class~$[\sigma]$ of an involution~$\sigma$ in~$\Gal(F)$, we have the ordering~\hbox{$P[\sigma]=\{a\in F\mid \sigma(\sqrt{a})=\sqrt{a}\}$}. Note that the condition~$\sigma(\sqrt{a})=\sqrt{a}$ does not depend on the choice of the square root~$\sqrt{a}$ of~$a$. We refer to~\cite[Thm.~I.11.3]{Knebusch--Scheiderer} for more detail.
\end{proof}

We take this observation as an incentive to turn our attention towards involutions.

Let~$F$ be a field and~$\Gal(F)$ be its absolute Galois group. From the work of Artin and Schreier cited in the introduction, see~\cite[Satz~4]{AS:2} in particular, it is known that the group~$\Gal(F)$, even though profinite, can only have non-trivial elements of finite order if~\hbox{$\ch(F)=0$}, and all of these elements necessarily have order~$2$, i.e., they are involutions. We refer to~\cite{Jarden--Videla} for an interesting discussion of torsion in absolute Galois groups from a different angle. The following lemma collects useful group-theoretic statements about involutions in absolute Galois groups.

\begin{lemma}\label{lem:centraliser}
Let~$F$ be a field and let~$\sigma\in\Gal(F)$ be an involution with fixed field~$\Fix(\sigma)$. Then~$\Aut(\Fix(\sigma)|F)=\{\id\}$. The centraliser of the involution~$\sigma$ within~$\Gal(F)$ is the subgroup~$\langle\sigma\rangle$ generated by~$\sigma$. The normaliser of~$\Gal(\Fix(\sigma))$ within~$\Gal(F)$ is~$\Gal(\Fix(\sigma))$ itself.
\end{lemma}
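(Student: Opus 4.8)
The plan is to rest everything on one structural fact, due to Artin and Schreier, that the fixed field $\Fix(\sigma)$ of an involution $\sigma\in\Gal(F)$ is real closed; the three assertions then follow from the Galois correspondence together with the rigidity of real closures. Throughout I write $G=\Gal(F)=\Gal(\overline{F}|F)$, so that $\Fix(\sigma)=\overline{F}^{\langle\sigma\rangle}$ and $[\overline{F}:\Fix(\sigma)]=2$. Since $\Fix(\sigma)$ is real closed, its algebraic closure is $\overline{F}=\Fix(\sigma)(\sqrt{-1})$; consequently the absolute Galois group $\Gal(\Fix(\sigma))$ is, naturally, the subgroup of $G$ fixing $\Fix(\sigma)$ pointwise, which by the Galois correspondence is exactly $\langle\sigma\rangle$. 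In particular, the third assertion is literally the statement that $N_{G}(\langle\sigma\rangle)=\langle\sigma\rangle$, and I expect all three claims to collapse to this one equality.

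First I would prove $\Aut(\Fix(\sigma)|F)=\{\id\}$. As $\Fix(\sigma)$ is algebraic over $F$ and real closed, it is a real closure of $F$ for the induced ordering $P[\sigma]$; here the positive elements of $F$ are precisely the $a$ with $\sqrt{a}\in\Fix(\sigma)$, matching the description given earlier. I would then invoke the Artin--Schreier uniqueness theorem: the real closure of an ordered field is unique up to a unique $F$-isomorphism and is therefore rigid, admitting no $F$-automorphism other than the identity (see Lam~\cite{Lam} or Knebusch--Scheiderer~\cite{Knebusch--Scheiderer:1}). This yields the first claim directly.

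Next I would translate this into group theory. For the closed subgroup $\langle\sigma\rangle\le G$ with fixed field $\Fix(\sigma)$, restriction gives the usual isomorphism $\Aut(\Fix(\sigma)|F)\cong N_{G}(\langle\sigma\rangle)/\langle\sigma\rangle$: an element $\tau\in G$ restricts to an $F$-automorphism of $\Fix(\sigma)$ precisely when $\tau(\Fix(\sigma))=\Fix(\sigma)$, equivalently $\tau\langle\sigma\rangle\tau^{-1}=\langle\sigma\rangle$, while the kernel of restriction is $\langle\sigma\rangle$ and surjectivity holds because every $F$-automorphism of the algebraic extension $\Fix(\sigma)$ extends to $\overline{F}$. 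Combined with the first claim this forces $N_{G}(\langle\sigma\rangle)=\langle\sigma\rangle$, which is exactly the third assertion after the identification $\Gal(\Fix(\sigma))=\langle\sigma\rangle$ above. Finally, since $\langle\sigma\rangle=\{e,\sigma\}$ and conjugation preserves the order of a group element, any $\tau$ normalising $\langle\sigma\rangle$ must send the order-two element $\sigma$ to an order-two element of $\{e,\sigma\}$, hence to $\sigma$ itself; thus $N_{G}(\langle\sigma\rangle)=C_{G}(\sigma)$, and the centraliser claim follows as well.

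The only genuinely non-formal input is the rigidity of real closures underlying the first step; everything else is bookkeeping with the Galois correspondence and the elementary observation that the normaliser and the centraliser of an order-two element coincide. I expect the main point to watch is making the identification $\Gal(\Fix(\sigma))=\langle\sigma\rangle$ and the isomorphism $\Aut(\Fix(\sigma)|F)\cong N_{G}(\langle\sigma\rangle)/\langle\sigma\rangle$ precise in the pro-finite setting, where one uses that $\langle\sigma\rangle$ is a closed (indeed finite) subgroup and that restriction is continuous. These verifications are routine but worth stating carefully.
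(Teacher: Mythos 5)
Your proposal is correct and takes essentially the same route as the paper: both proofs hinge on the Artin--Schreier fact that $\Fix(\sigma)$ is real closed, deduce the rigidity statement $\Aut(\Fix(\sigma)|F)=\{\id\}$, and then obtain the centraliser and normaliser claims by observing that an element of $\Gal(F)$ centralising~$\sigma$ (equivalently, normalising~$\langle\sigma\rangle$) induces an $F$-automorphism of $\Fix(\sigma)$ and hence lies in $\Gal(\Fix(\sigma))=\langle\sigma\rangle$. The only differences are cosmetic: the paper proves rigidity inline by exactly the standard argument behind the uniqueness theorem you cite (an order-preserving $F$-automorphism moving an algebraic element $a$ would produce an infinite orbit $a<\tau(a)<\tau^{2}(a)<\cdots$), and it derives the centraliser statement first, reading the normaliser statement off as a reformulation, whereas you go in the opposite order.
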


\begin{proof}
We first observe that~$\Fix(\sigma)$ is real closed and, therefore, has a unique ordering consisting of its squares. This means every field automorphism of~$\Fix(\sigma)$ preserves the ordering. 

If~$\tau$ is a field automorphism of~$\Fix(\sigma)$ that fixes~$F$, and~$a\in\Fix(\sigma)$ with~$\tau(a)\not=a$, we have~$a<\tau(a)$, say. Then~$a<\tau(a)<\tau^2(a)<\tau^3(a)<\dots$, contradicting the algebraicity of~$a$: the~$\tau$--orbit of~$a$ must be finite. The argument for~$a>\tau(a)$ is similar, and we deduce~$\tau(a)=a$ for all~$a\in\Fix(\sigma)$. This shows~\hbox{$\Aut(\Fix(\sigma)|F)=\{\id\}$}.

If an element~$\tau\in\Gal(F)$ lies in the centraliser of~$\sigma$, then~$\tau$ sends~$\Fix(\sigma)$ into itself. From what we have shown before, it follows that~$\tau$ is the identity on~$\Fix(\sigma)$ and lies in~\hbox{$\Gal(\Fix(\sigma))=\langle\sigma\rangle$}. The last statement is simply yet another reformulation of the preceding statement.
\end{proof}

Later, we shall need the following stronger statement about involutions in absolute Galois groups.

\begin{lemma}\label{lem:finite_subgroups}
A finite subgroup of an absolute Galois group has order at most~$2$. 
\end{lemma}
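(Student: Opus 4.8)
The plan is to show that a non-trivial finite subgroup $H\le\Gal(F)$ must be cyclic of order~$2$, using the description of centralisers just obtained in Lemma~\ref{lem:centraliser}. The starting point is the Artin--Schreier fact recalled above: every element of finite order in~$\Gal(F)$ is either trivial or an involution. Since~$H$ is finite, each of its elements has finite order, so every non-trivial element of~$H$ squares to the identity.

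First I would observe that a finite group in which every non-trivial element is an involution is automatically abelian: for two involutions~$x$ and~$y$, the product~$xy$ again satisfies~$(xy)^2=\id$, whence~$xy=(xy)^{-1}=y^{-1}x^{-1}=yx$. Thus~$H$ is an elementary abelian~$2$--group, and in particular every element of~$H$ is central in~$H$.

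Next, assuming~$H\neq\{\id\}$, I would pick an involution~$\sigma\in H$. On the one hand, since~$H$ is abelian, the centraliser of~$\sigma$ inside~$H$ is all of~$H$. On the other hand, Lemma~\ref{lem:centraliser} identifies the centraliser of~$\sigma$ inside the full group~$\Gal(F)$ with~$\langle\sigma\rangle=\{\id,\sigma\}$; intersecting with~$H$ shows that the centraliser of~$\sigma$ in~$H$ equals~$\langle\sigma\rangle$. Comparing the two computations forces~$H=\langle\sigma\rangle$, so that~$|H|=2$.

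The only genuine input beyond elementary group theory is Lemma~\ref{lem:centraliser}, which itself rests on the fact that the fixed field of an involution is real closed, so I do not anticipate a serious obstacle. The one point to handle with care is the reduction to the case where all non-trivial elements of~$H$ are involutions: this is exactly where the Artin--Schreier torsion restriction enters, and it is essential that~$H$ be finite, so that each of its elements has finite order and the restriction applies.
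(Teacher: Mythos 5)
Your proof is correct and follows essentially the same route as the paper: invoke Artin--Schreier to see that all non-trivial elements of the finite subgroup are involutions, note the subgroup is then elementary abelian, and use Lemma~\ref{lem:centraliser} to conclude that the centraliser $\langle\sigma\rangle$ of any involution $\sigma$ cannot accommodate a second involution, forcing the subgroup to be $\langle\sigma\rangle$ itself. The only difference is cosmetic: you spell out the standard argument that a group of exponent $2$ is abelian, which the paper dismisses with ``it follows from group theory.''
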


\begin{proof}
By Artin--Schreier, we already know that every element of the finite subgroup~$G$ must have order~$1$ or~$2$. It follows from group theory that~$G$ is an elementary abelian~$2$--group, that is~$G\cong(\bbZ/2)^n$ for some~$n$. We show~$n=1$ by showing that~$\Gal(F)$ cannot contain a subgroup of the form~$\bbZ/2\times\bbZ/2$: Given an involution~\hbox{$\sigma\not=\id$}, Lemma~\ref{lem:centraliser} says that its centraliser within~$\Gal(F)$ is the subgroup~$\langle\sigma\rangle$ generated by~$\sigma$. Therefore, there can be no other element in the group~$\Gal(F)$ that commutes with~$\sigma$.
\end{proof}

With these preliminaries out of our way, we can now define a new invariant of fields~$F$. Recall that we have defined, in~\eqref{eq:Inv}, the subset~$\Inv(G)$ of involutions of a group~$G$.

\begin{lemma}\label{lem:closed}
For any field~$F$, the set~$\Inv(\Gal(F))$ is a closed subspace of the absolute Galois group~$\Gal(F)$. Therefore, it is profinite as well.
\end{lemma}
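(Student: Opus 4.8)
The plan is to realise $\Inv(\Gal(F))$ as the complement of an isolated point inside a manifestly closed set. First I would observe that the squaring map $\Gal(F)\to\Gal(F)$, $s\mapsto s^2$, is continuous, so its fibre over the identity, the set $S=\{s\in\Gal(F)\mid s^2=\id\}$, is closed because $\Gal(F)$ is Hausdorff and singletons are closed. Since $\Inv(\Gal(F))=S\setminus\{\id\}$ by the convention in~\eqref{eq:Inv}, it suffices to prove that $\id$ is an \emph{isolated} point of $S$: once $\{\id\}$ is open in the subspace $S$, its complement $\Inv(\Gal(F))$ is closed in $S$, hence closed in $\Gal(F)$.

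The crux, and the only place where the field-theoretic input enters, is exhibiting an open neighbourhood of $\id$ in $\Gal(F)$ that contains no involution. For this I would pass to the extension $E=F(\rmi)$ obtained by adjoining a square root of~$-1$. This extension is either trivial or of degree two, and in either case it is Galois over~$F$, so $N=\Gal(E)$ is a closed subgroup of finite index, hence open and normal, in $\Gal(F)$; in particular $N$ is an open neighbourhood of $\id$. The key observation is that $E$ is not formally real: the element $-1$ is a square in~$E$, so it can be positive in no ordering, and $E$ admits no ordering at all. By the Artin--Schreier theory recalled before Lemma~\ref{lem:centraliser}, together with the observation in the proof of Lemma~\ref{lem:centraliser} that the fixed field of an involution is real closed, an absolute Galois group contains an involution only when its field is formally real. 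Consequently $N=\Gal(E)$ contains no involution.

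Combining these, I would conclude that $N\cap S=\{\id\}$: any $s\in N$ with $s^2=\id$ and $s\not=\id$ would be an involution lying in~$N$, which we have just excluded. Thus $\id$ is isolated in~$S$, and the first paragraph yields that $\Inv(\Gal(F))$ is closed in $\Gal(F)$. Finally, a closed subspace of the pro-finite space $\Gal(F)$ is again compact, Hausdorff, and totally disconnected, hence pro-finite, which gives the last assertion.

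I anticipate the main obstacle to be conceptual rather than computational: one must recognise that the naive argument ``$s\mapsto s^2$ is continuous and $\{\id\}$ is closed'' proves only that $S$ is closed, while $\id$ could \emph{a priori} be a limit of involutions, in which case $\Inv(\Gal(F))$ would fail to be closed. Ruling this out is exactly where the arithmetic of~$F$ --- via the non-real extension $F(\rmi)$ and Artin--Schreier --- must be invoked, and indeed in a general pro-finite group the set of involutions need not be closed.
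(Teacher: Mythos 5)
Your proof is correct and follows essentially the same route as the paper: both show that the involutions together with the identity form a closed set cut out by $\sigma^2=\id$, then isolate the identity by using the open subgroup $\Gal(F(\sqrt{-1}))$, which contains no involutions because $F(\sqrt{-1})$ is not formally real. Your version merely merges the paper's two cases (trivial versus quadratic extension) into a single statement, which is a harmless streamlining.
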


\begin{proof}
The subset~$\Inv(\Gal(F))\cup\{\id\}$ is closed, as this subset is defined by the equation~\hbox{$\sigma^2=\id$}. It remains for us to show that the identity is an isolated point. To see that, we consider two cases. First, if~$F(\sqrt{-1})=F$, then~$\Gal(F)$ contains no involution, and we are talking about the empty set. Else, the field~$F(\sqrt{-1})$ is a quadratic extension, so that~$U=\Gal(F(\sqrt{-1}))$ is an index 2 subgroup of~$\Gal(F)$, hence open and containing the identity. But~$F(\sqrt{-1})$ is not formally real, so that~$U$ contains no involution.
\end{proof}

\begin{definition}
Let~$F$ be a field. The~\textit{Artin--Schreier quandle}~$\AS(F)$ of~$F$ is the profinite involutory quandle
\begin{equation}
\AS(F)=\Inv(\Gal(F)).
\end{equation}
It consists of the set of involutions inside the absolute Galois group~$\Gal(F)$ of~$F$ together with the operation~$\rhd$ given by conjugation and its topology inherited from~$\Gal(F)$.
\end{definition}

\begin{remark}
The Artin--Schreier quandle~$\AS(F)$ of a field~$F$, just as its absolute Galois group~$\Gal(F)$, depends on the choice of an algebraic closure of~$F$. If~\hbox{$\omega\colon\Omega_1\to\Omega_2$} is an~$F$--isomorphism of algebraic closures of~$F$, then~$\sigma\mapsto \omega\sigma\omega^{-1}$ is an isomorphism~$\Aut(\Omega_1|F)\to\Aut(\Omega_2|F)$ and this isomorphism restricts to an isomorphism between the Artin--Schreier quandles computed using~$\Omega_1$ and~$\Omega_2$, respectively. As usual, we will suppress the choice of an algebraic closure from the notation.
\end{remark}

\begin{remark}
It is usually a bad idea to forget everything from a group except its conjugation. In~\cite{Szymik--Vik}, the authors argue that it is more reasonable to keep at least the group operations of arity~$0$ and~$1$ as well, which are the unit~$e$ and the power operations~$g\mapsto g^n$ for all integers~$n$. In the present situation, it is easy to recover this more refined structure from the knowledge of only the operation~$\rhd$ on~$\AS(F)$: By our definition of an involution, the unit~$e=\id$ of~$\Gal(F)$ is disjoint from~$\AS(F)$, and the power operations on the disjoint union~\hbox{$\AS(F)\cup\{\id\}$} are determined by~$\sigma^n=\id$ if~$n$ is even, as all elements~$g$ considered have order at most~$2$, and~$\sigma^n=\sigma$ if~$n$ is odd, for the same reason. This suggests that we are not missing out on something obvious when considering the set~$\AS(F)$ together with only the conjugation~$\rhd$ on it as an algebraic invariant of~$F$.
\end{remark}

Before we come to examples, let us note the following algebraic result about a particular property of Artin--Schreier quandles.

\begin{proposition}\label{prop:anabelian}
Let~$\AS(F)$ be the Artin--Schreier quandle of a field~$F$. For~$\sigma$ and~$\tau$ in~$\AS(F)$ we have~$\sigma\rhd\tau=\tau$ if and only if~$\sigma=\tau$.
\end{proposition}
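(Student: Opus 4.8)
The plan is to unwind the defining operation and then reduce everything to the centraliser computation already recorded in Lemma~\ref{lem:centraliser}. Since~$\rhd$ is conjugation and every element of~$\AS(F)$ is an involution (so that~$\sigma^{-1}=\sigma$), the equation~$\sigma\rhd\tau=\tau$ reads~$\sigma\tau\sigma^{-1}=\tau$, which is equivalent to saying that~$\sigma$ and~$\tau$ commute in~$\Gal(F)$. The statement to be proved thus becomes: two involutions in~$\Gal(F)$ commute if and only if they coincide.

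The backward implication is immediate from the quandle axiom: if~$\sigma=\tau$, then~$\sigma\rhd\tau=\tau\rhd\tau=\tau$ by idempotency. For the forward implication, I would assume~$\sigma\rhd\tau=\tau$, so that~$\tau$ lies in the centraliser of~$\sigma$ within~$\Gal(F)$. By Lemma~\ref{lem:centraliser}, this centraliser is precisely the two-element subgroup~$\langle\sigma\rangle=\{\id,\sigma\}$. Because~$\tau$ belongs to~$\AS(F)$, it is by convention an involution and hence~$\tau\neq\id$; the only remaining possibility is~$\tau=\sigma$, as desired.

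I do not expect any real obstacle here. All of the genuine content is already packaged inside Lemma~\ref{lem:centraliser}, whose force comes from~$\Fix(\sigma)$ being real closed with a unique ordering. The single point demanding care is purely bookkeeping: one must invoke the convention that the identity is excluded from~$\AS(F)$, since that is exactly what upgrades the conclusion~$\tau\in\{\id,\sigma\}$ to the sharp equality~$\tau=\sigma$.
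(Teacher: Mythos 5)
Your proof is correct and takes essentially the same route as the paper: both translate~$\sigma\rhd\tau=\tau$ into the statement that~$\tau$ centralises~$\sigma$ and then use the fact that the identity is excluded from~$\AS(F)$. The only cosmetic difference is that the paper cites Lemma~\ref{lem:finite_subgroups} at this point, while you appeal directly to Lemma~\ref{lem:centraliser} --- which is precisely the fact from which Lemma~\ref{lem:finite_subgroups} is derived, so your version is, if anything, marginally more direct.
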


\begin{proof}
The~`if' direction is part of the quandle axioms, which are evident by now. For the~`only if' direction, note that~$\sigma\rhd\tau=\tau$ is equivalent to~$\sigma\tau=\tau\sigma$, so that~$\tau$ must be in the centraliser of~$\sigma$. It cannot be the identity, by definition of~$\AS(F)$, so the result follows from Lemma~\ref{lem:centraliser}.
\end{proof}

Unlike groups, which always have a neutral element, quandles can be empty. The following result characterises when this happens for Artin--Schreier quandles.

\begin{proposition}\label{prop:formally_real}
Let~$F$ be a field. Then~$\AS(F)$ is non-empty if and only if~$F$ is formally real. This can only happen if~\hbox{$\ch(F)=0$}.
\end{proposition}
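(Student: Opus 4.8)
The plan is to connect the non-emptiness of $\AS(F)$ to the classical Artin--Schreier theory relating involutions in the absolute Galois group to orderings of the field. By definition, $\AS(F) = \Inv(\Gal(F))$ is non-empty precisely when $\Gal(F)$ contains an involution. The key observation, already recorded in the sketch of the proof that the real spectrum is identified with conjugacy classes of involutions, is that involutions in $\Gal(F)$ correspond to orderings: an involution $\sigma$ determines the ordering $P[\sigma] = \{a \in F \mid \sigma(\sqrt{a}) = \sqrt{a}\}$, and conversely every ordering arises this way. Thus the existence of an involution is equivalent to the existence of an ordering, which is by definition what it means for $F$ to be formally real.

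First I would argue the forward direction: if $\AS(F) \neq \emptyset$, pick an involution $\sigma$. By Lemma~\ref{lem:centraliser}, its fixed field $\Fix(\sigma)$ is real closed, hence formally real, and carries a unique ordering given by its squares. Restricting this ordering to $F \subseteq \Fix(\sigma)$ produces an ordering of $F$, so $F$ is formally real. (Equivalently, one simply invokes $P[\sigma]$ directly as an ordering of $F$.) Conversely, if $F$ is formally real, it admits an ordering $P$, which extends to an ordering on a real closure $R$ of $F$ inside a fixed algebraic closure. The real closure $R$ is real closed, so $R(\sqrt{-1})$ is algebraically closed, and the generator of $\Gal(R(\sqrt{-1})\,|\,R) \cong \bbZ/2$ gives an involution in $\Gal(F)$ fixing $R$. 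Hence $\AS(F)$ is non-empty.

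For the final clause, I would recall the Artin--Schreier fact, stated earlier in the excerpt, that $\Gal(F)$ can contain non-trivial torsion only when $\ch(F) = 0$. Since a non-empty $\AS(F)$ forces the existence of an involution, which is a non-trivial element of finite order, we conclude $\ch(F) = 0$. Alternatively, this follows because a field of positive characteristic cannot be formally real, as $-1$ is a sum of squares there, so the first equivalence already rules out positive characteristic.

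The main obstacle is really just bookkeeping about which facts to cite versus prove from scratch: the cleanest route is to lean on the already-sketched bijection between orderings and conjugacy classes of involutions, so that the proposition becomes a near-immediate restatement. The only genuine content to supply carefully is the existence of an involution given an ordering, i.e.\ constructing the real closure and its degree-two extension to the algebraic closure; the subtlety there is ensuring the real closure sits inside the chosen algebraic closure used to define $\Gal(F)$, which is routine given that any ordering of $F$ extends to a real closure algebraic over $F$.
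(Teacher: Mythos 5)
Your proof is correct and rests on the same fact the paper invokes: the classical Artin--Schreier equivalence between orderings of~$F$ and involutions in~$\Gal(F)$. The paper's proof is a one-line citation of this known result (plus the earlier-stated torsion fact for the characteristic-zero clause), whereas you supply the standard details---fixed field of an involution is real closed in one direction, real closure plus the degree-two extension in the other---so the content is the same, just spelled out.
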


\begin{proof}
A field~$F$ is known to be formally real if and only if its absolute Galois group~$\Gal(F)$ contains an involution~(see~\cite[Kor.~I.6.2]{Knebusch--Scheiderer}, for instance). 
\end{proof}

\begin{example}
The field~$\bbC$ of complex numbers is not formally real, and the Artin--Schreier quandle~$\AS(\bbC)$ is empty. The same holds for all finite fields and the cyclotomic fields~$\bbQ(\zeta_n)$ as soon as we are in the non-trivial range~$n\geqslant 3$.
\end{example}

Proposition~\ref{prop:formally_real} implies that it is not interesting---for our purposes---to consider fields that are not formally real. Therefore, unless otherwise stated, we can assume we are dealing with formally real fields.

\begin{proposition}\label{prop:real closed}
Let~$F$ be a field. Then~$\AS(F)$ is a singleton if and only if~$F$ is real closed.
\end{proposition}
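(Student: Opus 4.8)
The plan is to prove both implications by reducing to the Artin--Schreier theorem in the form: a field $F$ is real closed if and only if its absolute Galois group has order two. Recall that $\AS(F)=\Inv(\Gal(F))$ is the set of involutions in $\Gal(F)$, so "singleton" means "exactly one involution."

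For the "if" direction, suppose $F$ is real closed. Then $\overline{F}=F(\sqrt{-1})$ is a quadratic extension, so $\Gal(F)\cong\bbZ/2$. A cyclic group of order two has a unique involution, namely its non-trivial element, and hence $\AS(F)$ is a singleton. This direction is immediate.

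For the "only if" direction, suppose $\AS(F)=\{\sigma\}$. The key step I would carry out first is to promote "unique involution" to "central involution": conjugation permutes $\AS(F)$, since every conjugate $g\sigma g^{-1}$ of an involution is again an involution; as $\AS(F)$ has a single element, this forces $g\sigma g^{-1}=\sigma$ for every $g\in\Gal(F)$. Thus the centraliser of $\sigma$ in $\Gal(F)$ is all of $\Gal(F)$. On the other hand, Lemma~\ref{lem:centraliser} identifies that centraliser with $\langle\sigma\rangle$, a group of order two. Comparing the two descriptions yields $\Gal(F)=\langle\sigma\rangle\cong\bbZ/2$.

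It then remains to conclude that a field whose absolute Galois group has order two is real closed. This is precisely the Artin--Schreier classification: since $[\overline{F}:F]=|\Gal(F)|=2$ is finite and exceeds one, $F$ is real closed (with $\overline{F}=F(\sqrt{-1})$). I expect no serious obstacle here; the one point that deserves care is the passage from "unique involution" to "central involution," which rests on the elementary but essential observation that the conjugation action on $\Gal(F)$ stabilises $\AS(F)$. Everything else is a direct appeal to Lemma~\ref{lem:centraliser} and the classical Artin--Schreier theorem.
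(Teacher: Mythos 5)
Your proof is correct and follows essentially the same route as the paper, which disposes of the proposition in one line by citing the Artin--Schreier characterisation: $F$ is real closed if and only if $\Gal(F)$ has order~$2$. In fact your write-up is more complete than the paper's: the passage from ``$\AS(F)$ is a singleton'' to ``$\Gal(F)$ has order~$2$'' is not automatic, and your argument---the unique involution is stable under conjugation, hence central, hence by Lemma~\ref{lem:centraliser} the whole group is $\langle\sigma\rangle$---supplies exactly the bridge that the paper's proof leaves implicit.
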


\begin{proof}
We know from the work of Artin and Schreier~(see again~\cite[Satz~4]{AS:2}) that a field~$F$ is real closed if and only if its absolute Galois group has order~$2$.
\end{proof}

\begin{example}
The field~$\bbR$ of real numbers is real closed, and the unique element in the involutory quandle~$\AS(\bbR)=\{\gamma\}$ is complex conjugation~$\gamma\colon z\mapsto\overline{z}$ inside the absolute Galois group~\hbox{$\Gal(\bbR)=\Aut(\bbC|\bbR)=\{\id,\gamma\}$} of order~$2$.
\end{example}

\begin{remark}
A bijection exists between the set~$\AS(F)$ and the set of real closed subfields containing~$F$ of the algebraic closure of~$F$ used to define the absolute Galois group. An involution~$\sigma$ corresponds to its fixed field~$\Fix(\sigma)$.
\end{remark}

The following result shows that the examples characterised in Propositions~\ref{prop:formally_real} and~\ref{prop:real closed} are the only ones where the Artin--Schreier quandle is finite.

\begin{proposition}\label{prop:infinite}
If~$\AS(F)$ contains more than one element, it is infinite.
\end{proposition}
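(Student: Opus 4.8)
The plan is to show that two distinct involutions already force an infinite dihedral subgroup, which in turn contains infinitely many involutions. So suppose $\sigma,\tau\in\AS(F)$ with $\sigma\neq\tau$, and consider the subgroup $H=\langle\sigma,\tau\rangle\subseteq\Gal(F)$. It contains the three distinct elements $\id$, $\sigma$, $\tau$, so $|H|\geqslant 3$. By Lemma~\ref{lem:finite_subgroups} a finite subgroup of $\Gal(F)$ has order at most~$2$; hence $H$ must be infinite.

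Next I would locate the order of the product $r=\sigma\tau$. Since $\sigma^2=\tau^2=\id$, conjugation gives $\sigma r\sigma=\sigma(\sigma\tau)\sigma=\tau\sigma=r^{-1}$, so $\langle r\rangle$ is normal in $H$ and the quotient $H/\langle r\rangle$ is generated by the image of $\sigma$, hence has order at most~$2$. Thus $[H:\langle r\rangle]\leqslant 2$, and if $r$ had finite order then $H$ would be finite, contradicting the previous paragraph. Therefore $r=\sigma\tau$ has infinite order, and $H$ is the infinite dihedral group. (Equivalently, one could argue directly that $\sigma$ and $\tau$ do not commute, by Proposition~\ref{prop:anabelian} or Lemma~\ref{lem:centraliser}, ruling out the order-$2$ case for $r$, while the absence of higher torsion from Artin--Schreier rules out orders $\geqslant 3$.)

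Finally I would exhibit the involutions explicitly. Using $\sigma r^{k}\sigma=r^{-k}$, one computes $(r^{k}\sigma)^2=r^{k}(\sigma r^{k}\sigma)=r^{k}r^{-k}=\id$ for every $k\in\bbZ$, so each element $r^{k}\sigma$ squares to the identity. None of them is the identity: $r^{k}\sigma=\id$ would mean $r^{k}=\sigma$, impossible for $k\neq 0$ because $r$ has infinite order while $\sigma$ has order~$2$, and impossible for $k=0$ because $\sigma\neq\id$. Moreover $r^{k}\sigma=r^{j}\sigma$ forces $r^{k-j}=\id$, hence $k=j$. Thus $\{\,r^{k}\sigma : k\in\bbZ\,\}$ is an infinite family of pairwise distinct involutions lying in $\Gal(F)$, all belonging to $\AS(F)$, and the quandle is infinite.

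The only real content, and the single place where the arithmetic of absolute Galois groups is used, is the passage from ``two distinct, hence non-commuting, involutions'' to ``infinite dihedral group'': this rests entirely on Lemma~\ref{lem:finite_subgroups} (itself a consequence of Artin--Schreier together with Lemma~\ref{lem:centraliser}). Once the infinite dihedral configuration is secured, the infinitude is a purely formal feature of $D_\infty$, so I do not expect any genuine obstacle beyond correctly invoking the earlier lemmas.
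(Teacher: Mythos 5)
Your proof is correct and takes essentially the same route as the paper's: both rest on Lemma~\ref{lem:finite_subgroups} (finite subgroups of~$\Gal(F)$ have order at most~$2$) to show that two distinct involutions~$\sigma\neq\tau$ generate an infinite dihedral subgroup of~$\Gal(F)$, whose infinitely many involutions~$(\sigma\tau)^k\sigma$ all lie in~$\AS(F)$. The only difference is presentational: the paper proves injectivity of the canonical morphism~$\rmF_2\{s,t\}\to\Gal(F)$ by ruling out a non-trivial kernel, whereas you argue directly inside~$\Gal(F)$ that~$\langle\sigma,\tau\rangle$ is infinite and then that~$\sigma\tau$ has infinite order.
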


\begin{proof}
When the Artin--Schreier quandle~$\AS(F)$ contains more than one element, the absolute Galois group~$\Gal(F)$ contains two different involutions, say~\hbox{$\sigma\not=\tau$}. Consider the morphism~$\rmF_2\{s,t\}\to\Gal(F)$ of groups that sends the generator~$s$ to~$\sigma$ and~$t$ to~$\tau$, and suppose there was a non-trivial kernel.
Recall that the group~$\rmF_2\{s,t\}$ is the infinite dihedral group, which contains an infinite cyclic subgroup generated by the product~$st$ of the generators, with inverse~$ts$, and all other elements are alternating products~$ststs\dots$ and~$tstst\dots$ of odd length, which are conjugate to their~`middle' factor, either~$s$ or~$t$. The kernel of the morphism above cannot contain~$s$ or~$t$ by definition, and it cannot contain any other involution, as these are each conjugate to either~$s$ or~$t$ inside~$\rmF_2\{s,t\}$. It follows that the kernel is a subgroup of the infinite cyclic subgroup generated by~$st$, so that the image is a finite~(dihedral) subgroup of~$\Gal(F)$, and it contains two different involutions~\hbox{$\sigma\not=\tau$}. But the finite subgroups of~$\Gal(F)$ have order at most~$2$ by Lemma~\ref{lem:finite_subgroups}---a contradiction. Hence, the morphism~$\rmF_2\{s,t\}\to\Gal(F)$ is injective, and~$\Gal(F)$ contains infinitely many involutions because this already holds for the infinite dihedral group~$\rmF_2\{s,t\}$, and these lie in~$\AS(F)$.
\end{proof}


\section{Number fields}\label{sec:QQ}

The prime field~$F=\bbQ$ of rational numbers is formally real but not real closed. It has a unique ordering. In other words, the real spectrum~$\Ord(\bbQ)$ is a singleton, and all involutions in the absolute Galois group~$\Gal(\bbQ)$ form one conjugacy class. If we choose any involution~\hbox{$\sigma\in\AS(\bbQ)$}, then conjugation induces a homeo\-morphism~\hbox{$\Gal(\bbQ)/\langle\sigma\rangle\cong\AS(\bbQ)$}, via~$g\mapsto g\sigma g^{-1}$, where the subgroup~$\langle\sigma\rangle$ is the centraliser of the involution~$\sigma$~(see Lemma~\ref{lem:centraliser}). In particular, there are uncountably many involutions inside~$\Gal(\bbQ)$, and the topology is reasonably well understood. In contrast, the algebraic structure of the Artin--Schreier quandle~$\AS(\bbQ)$ supported on it is much more interesting:

\begin{theorem}\label{thm:AS(QQ)}
The Artin--Schreier quandle~$\AS(\bbQ)$ of the rational number field~$\bbQ$ is a free profinite involutory quandle. A basis is given by a Cantor space of involutions inside~$\Gal(\bbQ)$.
\end{theorem}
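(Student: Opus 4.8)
The plan is to realize $\AS(\bbQ)=\Inv(\Gal(\bbQ))$ as the free pro-finite involutory quandle $\hatFQ_2(S)$ on a suitable basis $S$, and then to identify the topological type of $S$. By Proposition~\ref{prop:universal}, it suffices to exhibit a pro-finite space $S$ and a continuous map $S\to\AS(\bbQ)$ inducing an \emph{isomorphism} $\hatFQ_2(S)\xrightarrow{\sim}\AS(\bbQ)$ of pro-finite involutory quandles. The natural candidate for $S$ is a closed set of involutions that forms a ``basis'' in the sense that it meets the unique conjugacy class of $\AS(\bbQ)$ in a controlled way. First I would recall the structural input coming from the theory of absolute Galois groups of number fields: by work on the pro-finite structure (Artin--Schreier theory together with the description of $\Gal(\bbQ)$ as acting on orderings), the group generated by the involutions is, up to pro-finite completion, a free pro-$2$ or universal-Coxeter-type amalgam. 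Concretely, I would aim to show that $\Gal(\bbQ)$ contains a closed subgroup that is a pro-finite free product $\hatF_2(S)$ of copies of $\bbZ/2$ indexed by $S$, such that every involution of $\Gal(\bbQ)$ is conjugate into it.

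The key steps, in order, are as follows. First, establish that the involutions of $\Gal(\bbQ)$ generate (topologically) a closed subgroup isomorphic to a pro-finite universal Coxeter group $\hatF_2(S)$; this rests on Lemma~\ref{lem:centraliser} (each involution is self-centralizing, so no two distinct involutions commute) together with the pro-finite Kurosh-type results of Haran--Jarden quoted in Proposition~\ref{prop:pro-finite_Kurosh}, which guarantee that a pro-finite group in which involutions have trivial pairwise interaction decomposes as such a free product. Second, apply Proposition~\ref{prop:pro-finite_Kurosh} to conclude that $\Inv(\hatF_2(S))=\hatFQ_2(S)$, i.e.\ every involution is conjugate to a basis element, giving an isomorphism $\AS(\bbQ)\cong\hatFQ_2(S)$ of involutory quandles once we check the conjugation-by-$\Gal(\bbQ)$ and conjugation-by-$\hatF_2(S)$ structures agree (which follows because the normalizer statement in Lemma~\ref{lem:centraliser} pins down conjugators up to the self-centralizer $\langle\sigma\rangle$, exactly as in the proof of Proposition~\ref{prop:universal}). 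Third, determine the homeomorphism type of the basis space $S$: since $\bbQ$ has a single ordering, $S$ maps onto a one-point real spectrum, so $S$ is the fiber of $\AS(\bbQ)$ over $\Ord(\bbQ)=\{*\}$; I would show $S$ is a nonempty, compact, totally disconnected, perfect metrizable space, hence a Cantor space by Brouwer's characterization. Nonemptiness and infiniteness come from Proposition~\ref{prop:infinite}; total disconnectedness and compactness are inherited from $\Gal(\bbQ)$; perfectness (no isolated points) I would extract from the homogeneity of the conjugacy class, using the homeomorphism $\Gal(\bbQ)/\langle\sigma\rangle\cong\AS(\bbQ)$ noted just before the theorem.

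The main obstacle I expect is the \emph{freeness} step, namely proving that the closed subgroup of $\Gal(\bbQ)$ generated by all involutions is genuinely the free pro-finite object $\hatF_2(S)$ rather than merely a quotient of it. The universal property always yields a surjection $\hatFQ_2(S)\twoheadrightarrow\AS(\bbQ)$; what is delicate is \emph{injectivity}, i.e.\ ruling out hidden relations among the involutions. Proposition~\ref{prop:anabelian} and Lemma~\ref{lem:finite_subgroups} handle the most basic relations (no nontrivial commuting, no finite subgroups beyond order $2$), but to exclude all relations one needs the full force of the Haran--Jarden pro-finite free-product decomposition for $\Gal(\bbQ)$, which is the genuinely arithmetic ingredient. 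I would therefore devote the bulk of the argument to reducing the freeness claim to that cited decomposition and to verifying its hypotheses in the case of $\bbQ$.

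Finally, I would assemble these pieces: the isomorphism $\AS(\bbQ)\cong\hatFQ_2(S)$ from steps one and two, combined with the identification of $S$ as a Cantor space from step three, yields that $\AS(\bbQ)$ is free pro-finite involutory on a Cantor-space basis, as claimed.
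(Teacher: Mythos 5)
Your proposal has the same skeleton as the paper's proof: pass to the closed subgroup generated by all involutions, identify it with a free pro-finite universal Coxeter group $\hatF_2(S)$ on a Cantor space $S$, deduce injectivity of $\hatFQ_2(S)\to\AS(\bbQ)$ from injectivity at the group level, and get surjectivity from Proposition~\ref{prop:pro-finite_Kurosh}. However, there is a genuine gap at exactly the step you yourself flag as the crux. You assert that the freeness of this subgroup follows from Lemma~\ref{lem:centraliser} (self-centralizing involutions) ``together with the pro-finite Kurosh-type results of Haran--Jarden\dots which guarantee that a pro-finite group in which involutions have trivial pairwise interaction decomposes as such a free product.'' No such criterion exists, and it is false as stated: Kurosh-type theorems describe subgroups of groups already known to be free products; they do not detect freeness. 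The paper itself contains a counterexample to your criterion, namely the Engler--Viswanathan field in Section~\ref{sec:Laurent}, whose absolute Galois group is $\hatF_2\{\rho\}\ltimes\hatF_2\{\sigma,\tau\}$: Lemma~\ref{lem:centraliser} applies there (it applies to every absolute Galois group), yet the relation $\rho\rhd\sigma=\tau$ holds and $\AS(F)$ is not free. So ``trivial pairwise interaction'' of involutions can never suffice; the freeness of $\AS(\bbQ)$ is an arithmetic fact about $\bbQ$, not a group-theoretic one.

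The ingredient your outline is missing is the identification of the relevant subgroup: the normal subgroup of $\Gal(\bbQ)$ generated by the involutions is $\Gal(\Qtr)$, the absolute Galois group of the field of totally real algebraic numbers, and the theorem of Fried--Haran--V\"olklein~\cite{FHV93,FHV94} and, independently, Pop~\cite{Pop} states precisely that $\Gal(\Qtr)\cong\hatF_2(S)$ for a Cantor space $S$. You never name $\Qtr$, you attribute the decomposition to Haran--Jarden (whose results enter only through Proposition~\ref{prop:pro-finite_Kurosh}, i.e.\ the surjectivity step), and you speak of a decomposition ``for $\Gal(\bbQ)$'', which is certainly not a free product of copies of $\bbZ/2$. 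Consequently your plan to ``verify the hypotheses'' of the cited decomposition has no content: the theorem is about the specific field $\Qtr$ and is proved by field arithmetic (PRC-ness, real Hilbertianity), not by checking group-theoretic hypotheses. A smaller but real problem is your step three: the fiber of $\AS(\bbQ)\to\Ord(\bbQ)$ over the unique ordering is all of $\AS(\bbQ)$, not the basis $S$, so this does not characterize $S$; in the paper the Cantor space comes packaged with the cited isomorphism $\Gal(\Qtr)\cong\hatF_2(S)$, and no separate appeal to Brouwer's characterization is needed in the proof.
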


This result indicates that the Artin--Schreier quandle~$\AS(\bbQ)$ is algebraically as complicated as possible, given the constraints of being an involutory quandle. Unfortunately, the proof will not display an explicit Cantor space inside~$\Gal(\bbQ)$ that is a basis for it. Instead, it implicitly uses Brouwer's characterisation~\cite{Brouwer} of the Cantor space: Every profinite space that meets the minimum requirements (i.e., it has to be non-empty, second countable, and does not have an isolated point) is homeomorphic to it.

\begin{proof}[Proof of Theorem~\ref{thm:AS(QQ)}]
Our goal is to show that there is a subspace~\hbox{$S\subseteq\Gal(\bbQ)$} homeo\-morphic to Cantor space and consisting of involutions so that the canonical extension
\begin{equation}\label{eq:canonical_quandles}
\hatFQ_2(S)\longrightarrow\AS(\bbQ)
\end{equation}
of the inclusion is an isomorphism of profinite quandles. As it is always a continuous morphism of profinite quandles, it will suffice to show that it is bijective.

The subset~$\AS(\bbQ)$ of involutions generates a normal subgroup of~$\Gal(\bbQ)$. Its closure~$N$ is a closed normal subgroup with fixed field an infinite Galois extension~$\Qtr$ of~$\bbQ$, the field of totally real numbers, and the group~$N$ can be identified with the absolute Galois group~$\Gal(\Qtr)$ of~$\Qtr$. Fried--Haran--V\"olklein~\cite{FHV93,FHV94} and~(independently) Pop~\cite{Pop} have shown that this absolute Galois group~$\Gal(\Qtr)$ of the field~$\Qtr$ is isomorphic to the free profinite Coxeter group~$\hatF_2(S)$ generated by a Cantor space~$S$. In other words, we can choose a subspace~$S$ as above so that the canonical morphism
\begin{equation}\label{eq:canonical_groups}
\hatF_2(S)\longrightarrow\Gal(\Qtr)
\end{equation}
is an isomorphism of profinite groups. Restriction to the subset of involutions gives an isomorphism
\[
\Inv(\hatF_2(S))\longrightarrow\Inv(\Gal(\Qtr))
\]
of profinite involutory quandles. The left hand side is~$\hatFQ_2(S)$ by definition~\eqref{eq:FQ=Inv(F)}. The right hand side is~$\Inv(\Gal(\bbQ))$ by our choice of~$\Gal(\Qtr)$ inside~$\Gal(\bbQ)$: it contains all involutions. The result then follows from the definition~$\AS(\bbQ)=\Inv(\Gal(\bbQ))$ of the Artin--Schreier quandle.
\end{proof}

\begin{remark}
For any field~$F$ of characteristic zero, passage to conjugacy classes gives a surjection~$\AS(F)\to\Ord(F)$ of profinite spaces from the Artin--Schreier quandle onto the real spectrum. This surjection should{\it~not} suggest, however, that we can compute the real spectrum~$\Ord(F)$ from the Artin--Schreier quandle~$\AS(F)$ alone. In particular, the reader may have wondered if we can recover the real spectrum of a field from its Artin--Schreier quandle as the space of orbits. To make perfect sense of this question, one would first have to exercise some care to define spaces of orbits for all profinite quandles. Any reasonable definition, however, would yield the following for Artin--Schreier quandles: the group~$\Gal(F)$ acts on the space~$\AS(F)$ via conjugation, and~$\Orb(\AS(F))$ is the space of orbits for the restriction of this action to the closed subgroup generated by~$\AS(F)$. Yet, we still cannot recover the real spectrum of a field from its Artin--Schreier quandle as the space of orbits in this sense~(see the following Example~\ref{ex:sqrt2}). Instead, we have two surjections.
\begin{equation}
\AS(F)\longrightarrow\Orb(\AS(F))\longrightarrow\Ord(F),
\end{equation}
and neither of them has to be injective. The Artin--Schreier quandle~$\AS(F)$ is contained inside~$\Gal(F)$, of course, and we can identify the real spectrum~$\Ord(F)$ with the orbits of the involutions under conjugation with~$\Gal(F)$. In contrast, the elements in~$\Orb(\AS(F))$ are the orbits under conjugation with respect to the closed subgroup generated by the involutions.
\end{remark}

\begin{example}\label{ex:sqrt2}
We give an example showing that we cannot recover the real spectrum of a field from its Artin--Schreier quandle as the set of orbits. First, note that we have an equality~\hbox{$\AS(\bbQ(\sqrt{2}))=\AS(\bbQ)$} of involutory quandles: every involution in~$\Gal(\bbQ)$ lies in the normal subgroup~$\Gal(\bbQ(\sqrt{2}))$~(see Theorem~\ref{thm:AS(F)} below for a more general statement.) Therefore, the Artin--Schreier quandles of~$\bbQ(\sqrt{2})$ and~$\bbQ$ are free on a Cantor space, and the space of orbits is the Cantor space in both cases. However, the rational number field~$\bbQ$ has a unique ordering, whereas the real quadratic extension~$\bbQ(\sqrt{2})$ has two. Some involutions in~$\Gal(\bbQ(\sqrt{2}))$ that are not conjugate in~$\Gal(\bbQ(\sqrt{2}))$ become conjugate in~$\Gal(\bbQ)$. 
\end{example}

Let us finally look at other examples of `small' fields. The other prime fields, finite fields, and all fields of positive characteristic are uninteresting to us, as their absolute Galois groups contain no involutions. We have already discussed~$\bbR$ and~$\bbC$, and the other local fields of characteristic~$0$~(i.e., the~$p$--adic fields~$\bbQ_p$ and their finite extensions) contain no involutions either: each~$\bbQ_p$ contains a square root of~$1-p^n$ for sufficiently large~$n$. This leaves us with the problem of extending Theorem~\ref{thm:AS(QQ)} to other number fields.

We first establish some more notation. Let~$F$ be a number field. We write~$d=\dim_\bbQ(F)$ for its degree. Then there are isomorphisms~\hbox{$\bbC\otimes_\bbQ F\cong\bbC^d$} and~$\bbR\otimes_\bbQ F\cong\bbR^{r_1}\oplus\bbC^{r_2}$ with~\hbox{$d=r_1+2r_2$}. The~$r_1$--element set~$\Mor(F,\bbR)$ of real embeddings~$F\to\bbR$, which we can identify with its real spectrum, sits inside the~$d$--element set~$\Mor(F,\bbC)$ of complex embeddings as the fixed points under the action of the Galois group~$\Aut(\bbC|\bbR)=\bbZ/2$ by complex conjugation. The orbits are the Archimedean places of~$F$, with~$r_1$ real ones and~$r_2$ complex ones. The number field~$F$ is totally real when~$d=r_1$ and~$r_2=0$, and~$F$ is totally imaginary when~$r_1=0$ and~$d=2r_2$. If~$F$ is Galois over~$\bbQ$, then it has to be either of those two, as the Galois group acts transitively on the set of Archimedean places of~$F$, showing that they all have to be of the same type. However, there are totally real number fields that are not Galois over~$\bbQ$.

\begin{theorem}\label{thm:AS(F)}
For any totally real number field~$F$, the Artin--Schreier quandle~$\AS(F)$ is a free profinite involutory quandle with a basis given by a Cantor space of involutions.
\end{theorem}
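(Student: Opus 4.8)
The plan is to reduce Theorem~\ref{thm:AS(F)} to the already-proved Theorem~\ref{thm:AS(QQ)} by showing that passing from~$\bbQ$ to a formally real number field~$F$ does not change the Artin--Schreier quandle, up to the kind of isomorphism we need. The key structural observation, exactly as in Example~\ref{ex:sqrt2}, is that~$\AS(F)=\Inv(\Gal(F))$ and~$\AS(\bbQ)=\Inv(\Gal(\bbQ))$ are both \emph{subsets of the same absolute Galois group}~$\Gal(\bbQ)$, because~$\Gal(F)$ is an open subgroup of~$\Gal(\bbQ)$ (of finite index~$d=\dim_\bbQ F$) once we fix a compatible algebraic closure. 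So the real content is to compare the involutions lying in the subgroup~$\Gal(F)$ with all involutions in~$\Gal(\bbQ)$.

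First I would set up the relationship between the involutions. The involutions in~$\Gal(\bbQ)$ all lie in the normal subgroup~$N=\Gal(\Qtr)$ generated by them, and~$\Qtr$ is the maximal totally real extension. Since~$F$ is formally real, it embeds into a real closure, hence~$F\subseteq\Qtr$ (more precisely~$F\cdot\Qtr=\Qtr$, and the totally real elements over~$F$ coincide with those over~$\bbQ$), so that~$N=\Gal(\Qtr)\subseteq\Gal(F)\subseteq\Gal(\bbQ)$. Consequently every involution of~$\Gal(\bbQ)$ already lies in~$\Gal(F)$, giving the set-theoretic equality~$\AS(F)=\AS(\bbQ)$ as subspaces of~$\Gal(\bbQ)$. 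The conjugation operation~$\rhd$ on each is the restriction of conjugation in~$\Gal(\bbQ)$; since~$N$ is contained in both~$\Gal(F)$ and~$\Gal(\bbQ)$, and by Proposition~\ref{prop:pro-finite_Kurosh} every involution is conjugate into a fixed Cantor basis~$S$ \emph{using elements of~$N$ alone}, the quandle structures agree. Thus~$\AS(F)=\AS(\bbQ)$ as pro-finite involutory quandles.

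Given that identification, the theorem follows immediately from Theorem~\ref{thm:AS(QQ)}: the same Cantor space~$S\subseteq N\subseteq\Gal(F)$ that is a basis for~$\AS(\bbQ)$ is a basis for~$\AS(F)$, and the canonical map~$\hatFQ_2(S)\to\AS(F)$ is the same isomorphism~\eqref{eq:canonical_quandles} as before. More conceptually, one verifies that the surjectivity and injectivity arguments of Theorem~\ref{thm:AS(QQ)} used \emph{only} the structure of~$N=\hatF_2(S)$ and the inclusion~$N\hookrightarrow\Gal(\bbQ)$; replacing~$\Gal(\bbQ)$ by the intermediate group~$\Gal(F)$ between~$N$ and~$\Gal(\bbQ)$ leaves both arguments intact, because the crucial injection~$\hatFQ_2(S)\to\hatF_2(S)\cong N$ and the Kurosh-type surjectivity live entirely inside~$N$.

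The main obstacle I anticipate is justifying cleanly that~$N=\Gal(\Qtr)$ is contained in~$\Gal(F)$, i.e.\ that~$F\subseteq\Qtr$. This is where formal reality of~$F$ is essential and where the hypothesis of the theorem is actually consumed: a totally imaginary field would have~$F\not\subseteq\Qtr$ and indeed an empty quandle. The point is that an element of~$\Gal(\bbQ)$ generated by involutions fixes~$\Qtr$ pointwise, and every element of~$F$, being totally real, lies in~$\Qtr$; hence such an element also fixes~$F$ and lies in~$\Gal(F)$. Once this containment~$N\subseteq\Gal(F)$ is established, everything else is a formal restriction of the~$\bbQ$ case, and no new topological or group-theoretic input is required beyond what Theorem~\ref{thm:AS(QQ)} already provides.
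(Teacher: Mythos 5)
There is a genuine gap, and it sits exactly where the real content of the theorem lies. Your entire reduction rests on the claim that formal reality of~$F$ gives~$F\subseteq\Qtr$, hence~$N=\Gal(\Qtr)\subseteq\Gal(F)$ and~$\AS(F)=\AS(\bbQ)$. That claim is false: \emph{formally real} means~$F$ admits at least one real embedding~($r_1>0$), whereas~$F\subseteq\Qtr$ means~$F$ is \emph{totally real}~($r_1=d$, every embedding is real). These are different conditions, and the trichotomy for number fields is: totally real, formally real but not totally real, totally imaginary. Take~$F=\bbQ(\sqrt[3]{2})$: it is formally real (it has a real embedding) but~$\sqrt[3]{2}$ has two complex conjugates, so~$\sqrt[3]{2}\notin\Qtr$ and~$F\not\subseteq\Qtr$. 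For such~$F$ the identification fails set-theoretically: an involution~$\sigma\in\Gal(\bbQ)$ lies in~$\Gal(F)$ if and only if its real closed fixed field contains the chosen root~$\sqrt[3]{2}$ in the algebraic closure, and conjugating any involution by an automorphism that permutes the cube roots of~$2$ produces involutions of~$\Gal(\bbQ)$ that do \emph{not} lie in~$\Gal(F)$. So~$\AS(F)\subsetneq\AS(\bbQ)$, and~$N\not\subseteq\Gal(F)$; the sentence ``every element of~$F$, being totally real, lies in~$\Qtr$'' is precisely the error. Your argument is valid only for totally real~$F$ (which covers, in particular, the formally real Galois case, since a formally real Galois number field is automatically totally real).

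The paper's proof is built to handle exactly the fields your reduction misses. When~$F$ is not totally real, one still has~$\AS(F)=\Inv(\Gal(F))=\Inv(\Gal(F)\cap\Gal(\Qtr))=\Inv(\Gal(E))$ for the compositum~$E=F\cdot\Qtr$, but now~$E$ is a \emph{proper} finite extension of~$\Qtr$, and~$\Gal(E)$ is a proper (infinite-index) subgroup of~$N\cong\hatF_2(S)$. Identifying this subgroup is not formal: the paper invokes that~$E$ is PRC (Prestel), that every proper finite extension of~$\Qtr$ is Hilbertian (Weissauer), and then the Fried--V\"olklein theorem on absolute Galois groups of formally real Hilbertian PRC fields (with an argument of Jarden) to conclude that~$\Gal(E)$ is again abstractly isomorphic to a free pro-finite Coxeter group~$\hatF_2(S')$ on a Cantor space, whence~$\AS(F)\cong\hatFQ_2(S')$ by Proposition~\ref{prop:universal}. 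This extra arithmetic input is the actual content of Theorem~\ref{thm:AS(F)} beyond Theorem~\ref{thm:AS(QQ)}; your conclusion that ``no new group-theoretic input is required'' is where the proof breaks down.
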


\begin{proof}
In the proof of Theorem~\ref{thm:AS(QQ)}, we have used the field~$\Qtr$ of totally real numbers. If the number field~$F$ is totally real, then it is contained in the field~$\Qtr$, and~$\Gal(F)$ sits in between~$\Gal(\Qtr)$ and~$\Gal(\bbQ)$. In particular, these groups all have the same subsets of involutions, and we can deduce~$\AS(F)=\AS(\bbQ)$. 
\end{proof}

In particular, if~$F$ is Galois over~$\bbQ$, then the Artin--Schreier quandle~$\AS(F)$ is either empty or isomorphic to~$\AS(\bbQ)$ as described above. 

One could argue that the totally real case is the most important. For instance, as Serre has remarked concerning the inverse Galois theory problem: All finite groups occur as Galois groups over~$\bbQ$ if and only if all finite groups occur as Galois groups of totally real extensions of~$\bbQ$~(see~\cite[Prop.~1]{Klüners--Malle}). However, there is something we can say concerning formally real number fields that are not totally real.

\begin{proposition}
Let~$F_0$ be a number field that is formally real, but not totally real. If~$F$ is any other formally real number field that is not totally real, then we have~\hbox{$\AS(F)\cong\AS(F_0)$}. In words, the Artin--Schreier quandle is independent of the choice.
\end{proposition}

\begin{proof}
We can consider the compositum~$E$ of~$\Qtr$ with~$F$, and this field will be a proper finite extension of~$\Qtr$. This larger field is even better than~$\Qtr$ in some sense. While the field~$\Qtr$ is pseudo real closed~(PRC)~by Pop~(see~\cite{Haran--Jarden--Pop}), but~\textit{not} Hilbertian, the proper finite extensions~$E$ of~$\Qtr$ are PRC~by Prestel~\cite{Prestel} and Hilbertian. In fact, every proper finite extension of~$\Qtr$ is Hilbertian~(see~\cite[Satz~9.7]{Weissauer} for a nonstandard proof and~\cite[Ch.~12]{Fried--Jarden} for a standard proof.) Hence, their absolute Galois groups
\[
\Gal(E)=\Gal(F)\cap\Gal(\Qtr)
\]
are known by~\cite[Cor.~2]{FV:PRC+Hilbert}: an argument due to Jarden shows that, in the formally real case, the profinite group~$\Gal(E)$ is abstractly isomorphic to the free product of~\hbox{$\Gal(\Qtr)\cong\hatF_2(S)$} with a free profinite group of countable infinite rank~(see~\cite[Cor.~1]{FV:PRC+Hilbert} and the remark following it). In particular, we see that the isomorphism type of~$\Gal(E)$, while different from~$\Gal(\Qtr)$, is independent of~$F$. Computing the set of involutions in~$\Gal(F)$ is the same as computing the involutions in the intersection of~$\Gal(F)$ with the subgroup generated by the involutions:
\[
\AS(F)=\Inv(\Gal(F))=\Inv(\Gal(F)\cap\Gal(\Qtr)).
\]
From the discussion above, we know that~$\Inv(\Gal(F)\cap\Gal(\Qtr))=\Inv(\Gal(E))$ is independent of~$F$. 
\end{proof}

Therefore, to determine the Artin--Schreier quandle of a field~$F$ as above, it would be sufficient to work out one particular example. However, this is difficult for reasons related to the splitting behaviour of involutions across subgroups~(see the following example). In addition, it is not known whether subquandles of free involutory quandles are free.

\begin{example}
The absolute Galois group~$\Gal(F)$ of a number field~$F$ is a finite index subgroup of the absolute Galois group~$\Gal(\bbQ)$ of the rational number field~$\bbQ$, and the reader may wonder if there is not a more abstract description of the splitting of a conjugacy class of involutions in a group~$G$ when intersected with a finite index subgroup~$H$ and its conjugates~$gHg^{-1}$. We have not found such, but offer the following example for contemplation. Take~$G$ to be the alternating group~$\rmA_5$. It contains~15 involutions, all conjugate to~$(12)(34)$. The subgroup~$H$ that is generated by~$(123)$ and~$(12)(45)$ is isomorphic to the symmetric group~$\rmS_3$. The group~$H$ and its conjugate~$gHg^{-1}$ with the element~$g=(14)(25)$ both contain three involutions each, but they only have one of them in common, namely~$(12)(45)$.
\end{example}


\section{Laurent series and other examples}\label{sec:Laurent}

In this section, we will address two questions. First, for any given field~$F$, we will describe the involutory quandle~$\AS(F(\!(t)\!))$ of the field~$F(\!(t)\!)$ of Laurent series with coefficients in~$F$ in terms of the Artin--Schreier quandle~$\AS(F)$. Second, we comment on the class of profinite involutory quandles that are realisable, up to isomorphism, as Artin--Schreier quandles~$\AS(F)$ for some field~$F$. From what we have seen for local and global fields so far, it seems reasonable to conjecture that these are all free, but we will show by example that this is not the case~(see also our Propositions~\ref{prop:anabelian} and~\ref{prop:infinite} for other restrictions).

\begin{theorem}\label{thm:Laurent}
For any field~$F$, there is an isomorphism
\[
\AS(F(\!(t)\!))\cong\AS(F)\times\hatFQ_2\{\sigma,\tau\}\cong\AS(F)\times\hatZ
\]
of profinite involutory quandles, where~$\hatFQ_2\{\sigma,\tau\}$ is the free profinite involutory quandle on a set~$\{\sigma,\tau\}$ with two elements. The quandle structure is given by~$a\rhd b=2a-b$ for~$a,b\in\hatZ$.
\end{theorem}

Before we embark on the proof, let us recall that we can identify~$\hatFQ_2\{\sigma,\tau\}$ with the set of involutions in the free profinite~$2$--group~$\hatF_2\{\sigma,\tau\}$. This group~$\hatF_2\{\sigma,\tau\}$ is the profinite infinite dihedral group~$\widehat{\rmD}_\infty$, and we can identify it with the semi-direct product~$\hatZ\rtimes\bbZ/2$, where the generator of~$\bbZ/2$ acts as~$-1$ on~$\hatZ$. An isomorphism is given by letting a pair~$(a,m)$ in the semi-direct product~$\hatZ\rtimes\bbZ/2$ correspond to~$(\sigma\tau)^a\sigma^m$ in~$\hatF_2\{\sigma,\tau\}$. Once the groups are identified this way, we see that we can thereby also identify the subset~$\hatFQ_2\{\sigma,\tau\}$ of involutions with the set~$\hatZ$, where now~$a\in\hatZ$ corresponds to the pair~$(a,m)$ with~$m$ odd in~$\hatZ\rtimes\bbZ/2$. For~$m$ and~$n$ odd, we get
\begin{align*}
(a,m)(b,n)(a,m)^{-1}
&=(a+(-1)^mb,m+n)((-1)^{-m}(-a),-m)\\
&=(a+(-1)^mb+(-1)^{m+n}(-1)^{-m}(-a),m+n-m)\\
&=(2a-b,n),
\end{align*}
and this calculation shows that the quandle structure we get on the set~$\hatZ$ by transport of structure is given by~\hbox{$a\rhd b=2a-b$}.

\begin{proof}[Proof of Theorem~\ref{thm:Laurent}]
The statement is void if the field~$F$ is not formally real: if the Laurent series field~$F(\!(t)\!)$ can be ordered, so can the subfield~$F$. In particular, we can assume that the field~$F$ has characteristic~$0$. For these fields, it is known that the absolute Galois group~$\Gal(F(\!(t)\!))$ is a semidirect product~(cf.~\cite{vdD+R}); in fact, there is an isomorphism~\hbox{$\Gal(F(\!(t)\!))\cong\hatZ\rtimes\Gal(F)$}~(see~\cite[Prop.~4.1(e)]{Geyer--Jarden}). The multiplication takes the form
\[
(a,g)(b,h)=(a+g(b),gh)
\]
for some~$\Gal(F)$--action on~$\hatZ$. In particular, an element~$(a,g)$ is an involution if and only if~$g=s$ is an involution and~$a+s(a)=0$. 

If an involution~$s$ acted as the identity, then~$a+s(a)=0$ implies~$2a=0$, hence~$a=0$ because~$\hatZ$ is torsionfree. In other words, the element~$s$ in~$\AS(F)=\Inv(\Gal(F))$ would have a unique pre-image in~$\AS(F(\!(t)\!))$, and the same would hold for the conjugates of~$s$. But for every ordering of~$F$, there are two different orderings of~$F(\!(t)\!)$, one where~$t$ is positive and one where~$t$ is negative~(see~\cite[Prop.~VIII.4.11]{Lam}, for instance). Hence, this is not possible, and~$s$ must act non-trivially. In fact, we can be more precise:

\begin{lemma}\label{lem:-1}
Every involution~$s\in\Gal(F)$ acts as~$-1$ on the kernel~$\hatZ$. 
\end{lemma}

If we had an action on the group~$\bbZ$, it would be clear that an involution~$s$ can only act as~$\pm1$, but~$\hatZ$ has more units than these, and we have to argue to exclude any other options. We will do this in detail once we have finished the main argument.

When the involution~$s$ acts as~$-1$, the equation~$a+s(a)=0$ is satisfied by~\textit{all} elements in~$\hatZ$, and we can identify the pre-image~$\{(a,s)\mid a\in\hatZ\}$ of any involution~$s$ canonically with~$\hatZ$. Taken together, we thereby get a bijection between~$\AS(F(\!(t)\!))$ and the product~\hbox{$\AS(F)\times\hatZ$}. The quandle operation is induced from the conjugation in the group, and the computation
\begin{align*}
(a,s)\rhd(b,t)&=(a,s)(b,t)(a,s)\\
&=(a+s(b),st)(a,s)\\
&=(a+s(b)+st(a),sts)\\
&=(a-b+a,sts)\\
&=(2a-b,s\rhd t)
\end{align*}
shows that this proves the statement.
\end{proof}

\begin{proof}[Proof of Lemma~\ref{lem:-1}]
Let~$\overline{F}$ be our chosen algebraic closure of~$F$. Puiseux's theorem states that the algebraic closure of~$\overline{F}(\!(t)\!)$ is the union~$\cup_n\overline{F}(\!(t^{1/n})\!)$, which we will denote by~$\overline{F}(\!(t^{1/\infty})\!)$.  For the field~$F(\!(t)\!)$, we need to be a bit more careful~(see~\cite[Prop.~4.1]{Geyer--Jarden} again). An algebraic closure of~$F(\!(t)\!)$ is the compositum~$\overline{F}\cdot F(\!(t^{1/\infty})\!)$. It contains the subfields~$F(\!(t^{1/\infty})\!)$ and~\hbox{$\overline{F}\cdot F(\!(t)\!)$}, with absolute Galois groups isomorphic to~$\Gal(F)$ and~$\hatZ$, respectively. The latter is normal, and we find that~$\Gal(F(\!(t)\!))\cong\hatZ\rtimes\Gal(F)$ is a semi-direct product. 

To better understand the $\Gal(F)$--action on $\hatZ$, we proceed as follows. A generator~$F_n$ of~$\Gal(\overline{F}\cdot F(\!(t^{1/n})\!)\mid\overline{F}\cdot F(\!(t)\!))\cong\bbZ/n$ acts by~$F_n(t^{1/n})=w_nt^{1/n}$ for a primitive~$n$--th root of unity~$w_n\in\mu_n(\overline{F})$. The elements~$F_n$ and~$w_n$ can be chosen with~\hbox{$(w_{mn})^m=w_n$} for all~$m$ so that the generators~$F_n$ fit together to define a topological generator of~\hbox{$\Gal(\overline{F}\cdot F(\!(t)\!))\cong\hatZ$}. The absolute Galois group~\hbox{$\Gal(F)$} acts on the coefficients of the Laurent series. On the normal subgroup~\hbox{$\Gal(\overline{F}\cdot F(\!(t)\!))$} of~$\Gal(F(\!(t)\!))$, it acts by conjugation, but the computation
\[
sF_ns^{-1}(t^{1/n})=sF_n(t^{1/n})=s(w_nt^{1/n})=s(w_n)t^{1/n}
\]
shows that this conjugation action is also just an action on the coefficients. In summary, we see that the~$\Gal(F)$--action on~$\hatZ$ is determined by the~$\Gal(F)$--action on the group~$\mu_\infty(\overline{F})$ of roots of unity in the algebraic closure~$\overline{F}$.

It remains for us to check that all involutions in the absolute Galois group~$\Gal(F)$ act on the group~$\mu_\infty(\overline{F})$ of roots of unity in the algebraic closure~$\overline{F}$ via the inversion~\hbox{$w\mapsto w^{-1}$}. We argue one prime at a time. If the prime~$p$ is odd, then all primitive~$p$--th power roots of unity are totally imaginary, and they cannot be fixed by an involution~$s$. As there is only one involution in the cyclic group~$(\bbZ/p^e)^\times$, this forces~\hbox{$s(w)=w^{-1}$} for all~\hbox{$w\in\mu_{p^e}(\overline{F})$}. At the even prime~$2$, when~$w\in\mu_{2^e}(\overline{F})$, we must have~$s(w)=w^{-1}$ for~$e=1$ trivially and for~$e=2$ by a similar reasoning as above: the group~$(\bbZ/4)^\times$ is cyclic. However, for~$e\geqslant3$, there are two other options for us to consider, namely~\hbox{$s(w)=-w$} and~$s(w)=-w^{-1}$. The first would imply that we have~\hbox{$s(w^2)=w^2$}, which we can rule out by induction on~$e$, as~\hbox{$w^2\in\mu_{2^{e-1}}(\overline{F})$}. The second would imply that~$w-w^{-1}$ were fixed by~$s$, but this element is totally imaginary. That leaves us with~\hbox{$s(w)=w^{-1}$} for all~$w\in\mu_{2^e}(\overline{F})$ as well.
\end{proof}

\begin{example}
As the Artin--Schreier quandle~$\AS(\bbR)$ is a singleton, Theorem~\ref{thm:Laurent} immediately gives that the Artin--Schreier quandle~$\AS(\bbR(\!(t)\!))$ is free, isomorphic to~$\hatFQ_2\{\sigma,\tau\}$. By intersecting two real closed fields, Geyer~\cite{Geyer} has constructed other formally real fields whose absolute Galois group is infinite dihedral,~i.e., isomorphic to the free~\hbox{pro-2-group}~\hbox{$\hatF_2\{\sigma,\tau\}$} on two generators. Of course, the correspoding Artin--Schreier quandle~\hbox{$\hatFQ_2\{\sigma,\tau\}$} is free on two generators as well.
\end{example}

We conclude with an example showing that Artin--Schreier quandles need not be free.

\begin{example}
Applying Theorem~\ref{thm:Laurent} twice, we see that
\[
\AS(\bbR(\!(t_1)\!)(\!(t_2)\!))\cong\hatFQ_2\{\sigma_1,\tau_1\}\times\hatFQ_2\{\sigma_2,\tau_2\},
\]
and this profinite involutory quandle is not free. In fact, it is not even isomorphic to a quandle of the form~$\hatFQ_2(S)$ after forgetting the topologies, i.e., algebraically. To see this, we can consider the group of transvections~(or displacements) of these quandles, introduced by Joyce~\cite[Sec.~5]{Joyce}: for a given quandle~$Q$, the subgroup~$\Tr(Q)\leqslant\Aut(Q)$ is the subgroup generated by the differences ${\lambda_a}^{\!\!\!-1}\lambda_b$ for all~\hbox{$a,b\in Q$}. If $S$ has at most one element, this group is trivial. As $\hatFQ_2\{\sigma,\tau\}\cong\hatZ$ with $a\rhd x=2a-x$, we can identify the group~$\Tr(\hatFQ_2\{\sigma,\tau\})$ with the group of maps~$\hatZ\to\hatZ$ of the form~\hbox{$x\mapsto x+2c$}, and this group is isomorphic to~$\hatZ$ again. As~$\Tr(Q\times R)\cong\Tr(Q)\times\Tr(R)$ by~\cite[Lem.~5.1]{Kai--Tamaru}, we have~\hbox{$\Tr(\AS(\bbR(\!(t_1)\!)(\!(t_2)\!)))\cong\hatZ^2$} for the quandle under consideration. It remains for us to argue that the groups~$\Tr(\hatFQ_2(S))$ are never of this form. From what we have computed above, this is already clear for~$|S|<3$. But if~$S$ contains three different elements~$s$,~$t$,~and~$u$, the group~$\Tr(\hatFQ_2(S))$ is not even abelian, as the elements~${\lambda_s}^{\!\!\!-1}\lambda_u$ and ${\lambda_t}^{\!\!\!-1}\lambda_u$ do not commute~(check on~$u$).
\end{example}


\section*{Acknowledgements} 

I thank Jayanta Manoharmayum for helpful discussions on absolute Galois groups, and the anonymous referee for exceptionally detailed and constructive comments that led to improvements in several aspects of this paper.



\vfill

School of Mathematical and Physical Sciences, University of Sheffield, Hicks Building, Hounsfield Road, Sheffield S3 7RH, UK\\
\href{mailto:m.szymik@sheffield.ac.uk}{m.szymik@sheffield.ac.uk}

Fakultät für Mathematik, Ruhr-Universit\"at Bochum, Universit\"atsstraße 150, 44801 Bochum, Germany\\
\href{mailto:markus.szymik@rub.de}{markus.szymik@rub.de}

\end{document}